 \newtheorem{thm}{Theorem}[section]
 \newtheorem{lem}[thm]{Lemma}
 \theoremstyle{definition}
 \theoremstyle{remark}
 \newtheorem*{rem}{Remark}
 \numberwithin{equation}{section}
\newcommand{\la}{\langle}
\newcommand{\ra}{\rangle}
\newcommand{\jbtt}{\la t/2 \ra}
\newcommand{\jbt}{\la t\ra}
\newcommand{\jbtr}{\la t-r\ra}
\newcommand{\rr}{{\mathbb R}}
\newcommand{\tr}{\mbox{tr}\;}
\begin{document}

\title[2-D Incompressible
  Elastodynamics]{Almost Global Existence for \\2-D Incompressible Isotropic
  Elastodynamics}

\author{Zhen Lei}
\address{School of
Mathematical Sciences\\
 LMNS and Shanghai Key
  Laboratory for Contemporary Applied Mathematics\\
   Fudan  University\\
Shanghai 200433\\
   P. R.\ China }
\email{leizhn@gmail.com, zlei@fudan.edu.cn}

 \author{Thomas C.\ Sideris}
\address{Department of Mathematics\\
University of California\\
Santa Barbara, CA 93106\\USA} \email{sideris@math.ucsb.edu}

 \author{Yi Zhou}
 \address{School of
Mathematical Sciences\\
 LMNS and Shanghai Key
  Laboratory for Contemporary Applied Mathematics\\
   Fudan  University\\
Shanghai 200433\\
   P. R.\ China}
\email{ yizhou@fudan.ac.cn}
\date{\today}

\thanks{The authors would like to thank Professor Fang-hua Lin of the Courant Institute for some
helpful discussions. }

\thanks{Zhen Lei was
supported by NSFC (grant No.11171072), FANEDD, Innovation Program of
Shanghai Municipal Education Commission (grant No.12ZZ012) and
NTTBBRT of NSF (No. J1103105).}

\thanks{Thomas C.\ Sideris was partially supported by the
National Science Foundation.
}

\thanks{Zhen Lei and Yi Zhou were
supported by the Foundation for Innovative Research Groups of NSFC
(grant No.11121101) and SGST 09DZ2272900.}

\keywords{  incompressible
elastodynamics,  almost global existence, generalized energy method, null condition, ghost weight}

\begin{abstract}
We consider the Cauchy problem for 2-D incompressible isotropic
elastodynamics. Standard energy methods yield local solutions on a
time interval $[0,{T}/{\epsilon}]$, for initial data of the form
$\epsilon U_0$, where $T$ depends only on some Sobolev norm of
$U_0$. We show that for such data there exists a unique solution on
a time interval $[0, \exp{T}/{\epsilon}]$, provided
that $\epsilon$ is sufficiently small. This is
achieved by careful consideration of the structure of the
nonlinearity.
 The incompressible
elasticity equation is inherently linearly degenerate in the
isotropic case; in other words, the equation  satisfies a null
condition.  This is essential for time decay estimates.  The pressure, which
arises as a Lagrange multiplier to enforce the incompressibility
constraint, is estimated in a novel way as a nonlocal nonlinear term with
null structure.  The proof employs the generalized energy method of Klainerman,
enhanced by weighted $L^2$ estimates and the ghost weight introduced by Alinhac.
\end{abstract}

\maketitle

\section{Introduction}%----------------------------introduction

The long-time behavior of elastic waves for isotropic
incompressible materials is studied in 2-D. The
equations of incompressible elastodynamics
display a linear
degeneracy in the isotropic case; i.e., the equation inherently
satisfies a null condition. By taking the advantage of this
 structure, we prove that the 2-D incompressible
isotropic nonlinear elastic system is almost globally well-posed
for small initial data. More precisely, we prove that for initial data
of the form $\epsilon U_0$, there exists a unique solution for a time interval
$[0, \exp({T(U_0)}/{\epsilon})]$, where $T(U_0)$ depends only on a certain weighted Sobolev
norm of the  $U_0$.

To place our result in context, we review a few highlights from the existence theory of nonlinear wave equations
and elastodynamics.
The initial value problem for small solutions of 3-D quasilinear wave equations with quadratic nonlinearities is
almost globally well-posed \cite{JohnKlainerman84}, and in general this is sharp \cite{John81}.  If, in addition, the nonlinearity satisfies the null condition, global existence
was shown independently in \cite{Christodoulou86} using conformal compactification
and in \cite{Klainerman86} using the generalized energy method.
The generalized energy method of Klainerman can be adapted to prove almost global existence for certain nonrelativistic systems of 3-D nonlinear wave equations,
 using scaling invariance to get weighted $L^2$-estimates, as
was first done in \cite{KS96}.
This approach was subsequently developed  to obtain global existence
under the null condition
 in \cite{SiderisTu01}, see also \cite{Yokoyama} for a different method.
A unified treatment for obtaining  weighted $L^2$ estimates for certain hyperbolic systems appeared in \cite{ST06}.

The existence question is more delicate in 2-D because, even with
the null condition, quadratic nonlinearities have critical decay. A
series of articles considered the case of cubically nonlinear
equations satisfying the null condition, see for example
\cite{Hoshiga,Katayama,L95}. Alinhac
\cite{Alinhac00} was the first to establish global existence for
null bilinear forms.
  His argument combines  vector fields  with what he called the ghost energy method, but crucially it
  also relies on finite propagation speed through a certain Hardy-type inequality.

 The long-time existence theory for isotropic elastodynamics largely follows the paradigm of nonlinear wave equations.
 Almost global existence of small displacement solutions for 3-D compressible elastodynamics was first shown in  \cite{John88},
 and counter-examples to global existence appeared in \cite{John84,T98}.
 The almost global existence proof was simplified considerably in \cite{KS96} by enhancing the vector field approach with
 a new weighted $L^2$ estimate based on scaling invariance to compensate for the absence of Lorentz invariance.
 In \cite{Sideris96}, it was first noticed that there was a  null structure compatible with the system
 of isotropic elastodynamics which can  be used to establish global existence of small displacement solutions in 3-D.
More comprehensive versions of this result appeared in \cite{Sideris00,Agemi00}.
 Compressible isotropic elastodynamics (in free space) is characterized by two wave families:  fast pressure waves and slower
 shear waves.  The aforementioned null condition limits the self-interaction of pressure waves.  The equations possess an inherent null
 structure for shear waves.  Thus, in the incompressible case where pressure waves are not present, it
 is plausible to expect global existence of small displacement solutions in 3-D without an additional null condition assumption,
 although the absence of finite propagation speed  presents an obstacle.
 Nevertheless,
 this intuition was confirmed in \cite{ST05,ST07}.

 Our method for proving almost global existence for incompressible isotropic
 elastodynamics in 2-D is based on  the ideas of \cite{ST07}, with two new ingredients:
 the treatment of the pressure term and the use of the ghost weight.
 The equations are written as a first-order system with constraints whose unknowns are
   the material space-time gradient of the deformation, expressed in spatial coordinates (Section \ref{results}).
The advantage of this choice is that the Lagrange multiplier which enforces the incompressibility constraint
appears as a pressure  gradient.

The generalized energy method  forms the backbone of the argument.  This enables control in $L^2(\rr^2)$ of derivatives
 of the solution with respect to the vector fields which arise as  infinitesimal generators corresponding to the fundamental invariance of
 the equations under translation, rotation, and scaling.  This invariance gives rise to the basic commutation identities for the vector fields
 (Section \ref{commutation}).
 The rotational vector field yields $|x|^{-1/2}$ spatial decay, by means of Sobolev-type inequalities (Section \ref{ellinfty}),
 which is weaker than the fully Lorentz invariant case.
 Using scaling invariance, the spatial decay
  is improved  to $t^{-1/2}$ time decay in $L^\infty(\rr^2)$ by means of a series of weighted estimates for the gradient of the
 solution which follow from an algebraic manipulation of the  equations and the constraints.
 This algebraic procedure (which is  an implementation of the method of \cite{ST06}) allows
 control of the solution gradient in $L^2(\rr^2)$ with the weight $(t-|x|)^{-1}$, and it also gives  $t^{-1}$ decay in $L^2(\rr^2)$ of
 certain special quantities (Sections \ref{special1}, \ref{special2}).
 For small energy solutions, the weighted estimates are  closed by a bootstrapping argument (Section \ref{weight}).
 The pressure gradient is estimated as a solution of a nonlinear Poissson equation (Section \ref{pressure}).
 The nonlinearities of this elliptic equation have a null structure.  Thus, the pressure term is essentially treated
 in a novel way as a nonlocal null form.  This absence of finite propagation speed prevents us from achieving
 global existence.
 Energy estimates are performed using the ghost weight of \cite{Alinhac00} (Section \ref{ghostenergy})
 which provides a convenient solution to the technical problem that the weighted estimates hold only for the
 solution gradient.

 Finally, the general isotropic case is easily treated by our method since it can be regarded as a higher order nonlinear correction
 to the Hookean case (Section \ref{general}).

Before ending the introduction, let us mention some related works on
viscoelasticity, where there is viscosity in the momentum equation.
In 2-D, the global well-posedness with near equilibrium states is
due to  \cite{LLZ2005} (see also \cite{LZ2005}), and in 3-D,
\cite{CZ2006} and \cite{LeiLZ08}, independently. The initial
boundary value problem is considered in \cite{LL2008}, the
compressible case can be found in \cite{QZ2010, HW2011}. For more
results near equilibrium,  readers are referred to
{\cite{HX2010,Qian2010,Qian2011,ZF-1,ZF-2}. In \cite{Lei} a class of large solutions
in two space dimensions are established via the strain-rotation
decomposition (which is based on earlier results in
\cite{Lei2008} and \cite{Lei3}). In all of these works, the initial
data is restricted by the viscosity parameter. The work
\cite{Kessenich} was the first to establish global existence for 3-D incompressible
viscoelastic materials uniformly in the viscosity parameter.

\section{Preliminaries and Main Results}
\label{results}

Classically, the motion of an elastic body is described as a
second-order evolution equation in Lagrangian coordinates. In the
incompressible case, the equations are more conveniently written as a first-order system
with constraints in Eulerian coordinates. We start with a
time-dependent family of orientation-preserving diffeomorphisms
$x(t, \cdot)$, $0 \leq t < T$. Material points $X$ in the
reference configuration are deformed to the spatial positions
$x(t, X)$ at time $t$. Derivatives with respect to the spatial
coordinates will be written as $(\partial_t, \nabla) = \partial$.
Let $X(t,x)$ be the corresponding reference map:  $X(t,\cdot)$
is the inverse of $x(t,\cdot)$.

\begin{lem}\label{lem2-1}
Given a family of  deformations $x(t, X)$,
define the velocity,
deformation gradient, and displacement gradient as follows:
\begin{equation}\nonumber
v(t, x) = \frac{dx(t, X)}{dt}\Big|_{X = X(t, x)},\quad F(t, x) =
\frac{\partial x(t, X)}{\partial X}\Big|_{X = X(t, x)},\quad G(t,
x) = F(t, x) - I.
\end{equation}
Then for $0 \leq t < T$,  we
have
\begin{equation}
\label{derconstr}
\partial_jG_{ik} - \partial_kG_{ij}
= G_{mk}\partial_mG_{ij} - G_{mj}\partial_mG_{ik},
\quad i, j, k \in \{1, 2, \cdots, n\},
\end{equation}
and
if $x(t,X)$ is incompressible, that is $\det F(t, x) \equiv 1$,
then
\begin{equation}
\label{incomconstr}
\nabla\cdot v=\partial_iv_i=0
\quad\mbox{and}\quad
(\nabla\cdot F^\top)_j=\partial_iF_{ij} = 0.
\end{equation}
\end{lem}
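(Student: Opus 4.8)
The plan is to prove the three identities by differentiating the defining relations between the Lagrangian map $x(t,X)$ and its inverse $X(t,x)$, then expressing everything in spatial coordinates. First I would record the chain rule relating material and spatial derivatives: since $F(t,x) = \frac{\partial x}{\partial X}\big|_{X=X(t,x)}$ and $X(t,\cdot)$ inverts $x(t,\cdot)$, the matrix $\frac{\partial X}{\partial x}$ equals $F^{-1}$, so for any quantity $\Phi(t,X)$ pulled back to spatial coordinates one has $\partial_j \big(\Phi|_{X=X(t,x)}\big) = \frac{\partial X_a}{\partial x_j}\,(\partial_{X_a}\Phi)|_{X=X(t,x)}$, i.e. $(F^{-1})_{aj}$ converts a spatial gradient into a material gradient. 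Equivalently, $(\partial_{X_a}\Phi)|_{X=X(t,x)} = F_{ja}\,\partial_j(\Phi|_{X=X(t,x)})$.

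For the first identity \eqref{derconstr}, I would start from the observation that $F_{ik}(t,x) = \frac{\partial x_i}{\partial X_k}$, which as a function of $X$ is a pure gradient, hence its material curl vanishes: $\partial_{X_k}\big(\partial_{X_j} x_i\big) - \partial_{X_j}\big(\partial_{X_k} x_i\big) = 0$, i.e. $\partial_{X_k} F_{ij} = \partial_{X_j} F_{ik}$ as functions of $X$. Now convert the material derivatives to spatial ones using the chain-rule formula above: $\partial_{X_k} F_{ij} = F_{mk}\,\partial_m F_{ij}$ (evaluated at $X=X(t,x)$), and similarly with $j,k$ swapped. Subtracting gives $F_{mk}\partial_m F_{ij} - F_{mj}\partial_m F_{ik} = 0$. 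Finally substitute $F = I + G$: the left side becomes $\partial_k G_{ij} - \partial_j G_{ik} + G_{mk}\partial_m G_{ij} - G_{mj}\partial_m G_{ik}$ (since $\partial_m G = \partial_m F$ and the $I$ contributes the two linear terms), which rearranges to \eqref{derconstr}.

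For \eqref{incomconstr}, the incompressibility $\det F(t,x) \equiv 1$ is the key. The identity $\partial_i v_i = 0$ follows from the transport structure: differentiating $\det F = 1$ along particle trajectories, or more directly, $v(t,x)$ is the spatial velocity field of a volume-preserving flow, so $\nabla\cdot v = \operatorname{tr}(\nabla v) = 0$; concretely, $\partial_t \det F + v\cdot\nabla \det F = \det F\,(\nabla\cdot v)$ and the left side vanishes. For the Piola identity $\partial_i F_{ij} = 0$, I would use the classical fact that the cofactor matrix of $F$ is divergence-free in the appropriate variable: writing $\operatorname{cof} F$ for the matrix of cofactors, one has $\partial_{X_a}(\operatorname{cof} F)_{ia} = 0$ identically (a consequence of the equality of mixed partials of $x(t,X)$), and when $\det F = 1$ one has $\operatorname{cof} F = (\det F)\,F^{-\top} = F^{-\top}$. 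Converting the material divergence $\partial_{X_a}$ to a spatial one via $\partial_{X_a} = F_{ba}\partial_b$ and unwinding the cofactor relation then yields $\partial_i F_{ij} = 0$; alternatively, since $(F^{-1})_{ja} = \frac{\partial X_j}{\partial x_a}$ and the columns of $F$ are $\partial x/\partial X_k$, the Piola identity is exactly $\sum_i \partial_{x_i}\big(\partial x_j/\partial X_i \circ X(t,x)\big) = 0$, which one checks by the same chain-rule bookkeeping.

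The main obstacle is purely organizational: keeping the composition $X = X(t,x)$ straight so that one does not confuse material derivatives $\partial_{X}$ with spatial derivatives $\partial_x = \partial$, and correctly applying the conversion factor $F$ (or $F^{-1}$) each time a derivative lands on a pulled-back quantity. Once the chain-rule dictionary is set up cleanly, each of the three identities reduces to an algebraic consequence of the trivial facts that a gradient has no curl and that the cofactor matrix has no divergence; no analysis or estimates are involved. I would present the derivative-conversion lemma first as a short computation and then treat \eqref{derconstr}, the first equation of \eqref{incomconstr}, and the Piola identity in turn.
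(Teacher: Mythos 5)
Your proposal is correct and takes essentially the same route as the paper, whose proof simply observes that \eqref{derconstr} expresses the commutativity of material derivatives in spatial coordinates and that \eqref{incomconstr} follows from Nanson's formula, with details deferred to the cited reference of Lei--Liu--Zhou. Your chain-rule conversion plus equality of mixed partials is exactly that commutativity argument, and your use of the Jacobian transport identity and the Piola (cofactor divergence-free) identity is the standard way of carrying out the Nanson-formula step, so the two arguments coincide in substance.
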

\begin{proof}
The first statement expresses the commutativity of material derivates $D_jD_k=D_kD_j$
in spatial coordinates.  The second statement follows from Nanson's formula.
Details are given in \cite{LeiLZ08}.
\end{proof}
Here and in what follows, we use the
summation convention over repeated indices. The identities
\eqref{derconstr} and \eqref{incomconstr} will be used repeatedly
in the sequel.

To best illustrate our methods and ideas, we shall  first consider
the equations of motion for incompressible Hookean elasticity,
which corresponds to the Hookean strain energy function $W(F) =
\frac{1}{2}|F|^2$.
\begin{equation}\label{Elas}
\begin{cases}
\partial_tv + v\cdot\nabla v + \nabla p = \nabla\cdot(FF^\top),\\[-4mm]\\
\partial_tF + v\cdot\nabla F = \nabla vF,\\[-4mm]\\
\nabla\cdot v = 0,\quad \nabla\cdot F^\top=0.
\end{cases}
\end{equation}
As will be seen in Section \ref{general} where the case of
general energy function is discussed, there is no essential loss
of generality in considering this simplest case.
Since the 3-D case has been treated in \cite{ST05,
ST07}, below we will focus on 2-D.

Denote the
rotation operator by
\begin{equation}\nonumber
\Omega = x_2\partial_1 - x_1\partial_2=x^\perp\cdot\nabla,
\end{equation}
and the scaling operator by
\begin{equation}\nonumber
S = t\partial_t + x_1\partial_1 + x_2\partial_2=t\partial_t+r\partial_r,\quad S_0 =
r\partial_r.
\end{equation}
We shall frequently use the decomposition
\begin{equation}
\label{der-decomp}
\nabla=(x/r)\partial_r+(x^\perp/r^2)\Omega.
\end{equation}
Let $\Gamma$ be any of the following differential operators
\begin{equation}\label{operator}
\Gamma \in \{\partial_t, \partial_1, \partial_2,
\widetilde{\Omega}, S\}.
\end{equation}
As in previous works, we define
\begin{equation}\nonumber
\begin{cases}
\widetilde{\Omega}f = \Omega f,\quad \forall\ {\rm scalar}\ f,\\[-4mm]\\
\widetilde{\Omega}v = \Omega v + (e_2\otimes e_1 - e_1\otimes
e_2)v,\quad \forall\ v \in \mathbb{R}^2,\\[-4mm]\\
\widetilde{\Omega}G = \Omega G + [(e_2\otimes e_1 - e_1\otimes
e_2), G],\quad \forall\ G \in \mathbb{R}^2\otimes \mathbb{R}^2,
\end{cases}
\end{equation}
where $[A,B] = AB - BA$ denotes the standard Lie bracket product.

Define the
generalized energy by
\begin{equation}\label{Energy}
E_k(t) = \sum_{|\alpha| \leq k}\big(\|\Gamma^\alpha v(t,
\cdot)\|_{L^2}^2 + \|\Gamma^\alpha G(t, \cdot)\|_{L^2}^2\big).
\end{equation}
We also define the weighted energy norm
\begin{equation}\label{WEnergy}
X_k(t) = \sum_{|\alpha| \leq k - 1}\big(\|\jbtr \nabla\Gamma^\alpha v\|_{L^2}
+\|\jbtr \nabla\Gamma^\alpha G\|_{L^2}\big),
\end{equation}
in which we denote $\la \sigma\ra = \sqrt{1 + \sigma^2}$.

To describe the space of the initial data, we introduce
\begin{equation}\nonumber
\Lambda = \{\nabla, S_0, \Omega\},
\end{equation}
and
\[
H^k_\Lambda =\{f:\sum_{|\alpha|\le k}\|\Lambda^\alpha f\|_{L^2}<\infty\}.
\]
Then we define
\[
H^k_\Lambda(T)=\{(v,G):[0,T)\to \rr^2\times(\rr^2\otimes \rr^2): (v,G)\in\cap_{j=0}^k C^j([0,T);H^{k-j}_\Lambda\}
\]
with the norm
\[
\sup_{0\le t<T} E_k(t)^{1/2}.
\]

The main result of this paper is

\begin{thm}\label{thm}
Let $(v_0,G_0)\in H^k_\Lambda$, with $k\ge5$.
Suppose that $(v_0,G_0)$ satisfy the constraints \eqref{derconstr}, \eqref{incomconstr}
and $\|(v_0,G_0)\|_{H^k_\Lambda}<\epsilon$.

 There exist two
positive constants $C_0$ and $\epsilon_0$ which depend only on $k$
such that, if $\epsilon \leq \epsilon_0$, then the system of
incompressible Hookean elastodynamics \eqref{Elas} with initial data
$(v_0, F_0)=(v_0,I+G_0)$ has a unique solution $(v,F)=(v, I+G)$,
with $(v, G) \in H^k_\Lambda(T)$,  $T\ge
\exp(C_0/\epsilon)$ and $E_k(t)\le 2\epsilon^2$, for $0\le t<T$.
\end{thm}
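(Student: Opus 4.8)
The plan is to prove the theorem by a continuity argument resting on a priori bounds. Standard energy estimates for the quasilinear system \eqref{Elas} together with its constraints yield a unique local solution $(v,G)\in H^k_\Lambda(T_1)$ for some $T_1>0$; uniqueness on any common interval of existence follows from an energy estimate for the difference of two solutions, so the real content is to propagate the bound $E_k(t)\le 2\epsilon^2$ up to time $T\ge\exp(C_0/\epsilon)$. Let $T^\ast$ be the supremum of times $T$ for which a solution exists on $[0,T)$ with $E_k(t)\le 2\epsilon^2$ there. Assuming for contradiction that $T^\ast<\exp(C_0/\epsilon)$, I will show that in fact $E_k(t)\le\tfrac32\epsilon^2$ on $[0,T^\ast)$; then the local theory continues the solution past $T^\ast$ retaining the bound $E_k\le 2\epsilon^2$, contradicting maximality.

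The a priori estimate is built from a hierarchy. First, using the commutation identities of Section~\ref{commutation}, apply $\Gamma^\alpha$ for $|\alpha|\le k$ to \eqref{Elas}: the operators in \eqref{operator} commute with the linearization up to lower-order terms of the same form, and the constraints \eqref{derconstr}, \eqref{incomconstr} are propagated both by the flow and by differentiation. Second, convert $L^2$ control into pointwise decay: the weighted Sobolev inequality of Section~\ref{ellinfty} gives $|x|^{-1/2}$ decay of low-order derivatives in terms of $E_k^{1/2}$, and the algebraic manipulation of the equations and constraints in Sections~\ref{special1} and \ref{special2} upgrades this to $\jbt^{-1/2}$ time decay in $L^\infty$ of $\nabla\Gamma^\alpha(v,G)$ for $|\alpha|$ small, at the cost of the weighted norm $X_k$ (and also gives $\jbt^{-1}$ decay in $L^2$ of certain special combinations). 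Third, close the weighted bound: the bootstrapping argument of Section~\ref{weight} gives $X_k(t)^2\lesssim E_k(t)$ for $\epsilon$ small, so the weight $\jbtr^{-1}$ is under control.

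Next comes the pressure. Taking the divergence of the momentum equation and using $\nabla\cdot v=0$ produces the Poisson equation $-\Delta p=\partial_i\partial_j(v_iv_j)-\partial_i\partial_j(F_{ik}F_{jk})$; after invoking $\nabla\cdot F^\top=0$, the right-hand side is a sum of genuine quadratic null forms in $\nabla v$ and $\nabla G$. Applying $\Gamma^\alpha$, commuting $\Gamma^\alpha$ past $\Delta$ (which costs only lower-order terms of the same type), and using Calder\'on--Zygmund estimates, one bounds $\|\nabla\Gamma^\alpha p\|_{L^2}$ by exactly the bilinear null expressions that already occur in the energy estimates for $v$ and $G$; this is the sense in which $\nabla p$ is treated as a nonlocal null form (Section~\ref{pressure}). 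Finally the energy estimate is carried out with Alinhac's bounded ghost weight $e^{q}$, $q=q(t-r)$, as in Section~\ref{ghostenergy}: testing the equations for $\Gamma^\alpha(v,G)$ against $e^{q}\,\Gamma^\alpha(v,G)$ yields $\tfrac{d}{dt}$ of a ghost-modified energy equivalent to $E_k$, plus a nonnegative \emph{good term} controlling, with weight $\jbtr^{-2}$, the derivative of $\Gamma^\alpha(v,G)$ tangent to the characteristic cone. Since every quadratic term on the right-hand side --- including the pressure contribution --- has null structure, each can be rewritten so that one factor is either a good (cone-tangential) derivative or comes with an extra $\jbtr/\jbt$ decay factor; combining the $\jbt^{-1/2}$ pointwise decay, the good term, and $X_k^2\lesssim E_k$, one arrives at
\[
\frac{d}{dt}E_k(t)\;\lesssim\;\frac{\epsilon}{\jbt}\,E_k(t)\;+\;(\text{terms absorbed by the good term}).
\]
Gronwall then gives $E_k(t)\le\epsilon^2(1+t)^{C\epsilon}$, which is at most $\tfrac32\epsilon^2$ as long as $t\le\exp(C_0/\epsilon)$ once $C_0$ (depending only on $k$) is chosen small enough; this closes the bootstrap and completes the proof.

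The principal obstacle, and the main new point, is the pressure. Because $p$ is determined nonlocally, there is no finite propagation speed and hence no access to the Hardy-type inequality underlying Alinhac's argument; everything hinges on recognizing the null structure of the source term in the Poisson equation for $p$ and carrying it through the elliptic estimates, so that $\nabla p$ enters the energy identity only through terms of the same favorable type as the other nonlinearities. A related technical difficulty is that the weighted $L^2$ bounds control $\nabla\Gamma^\alpha(v,G)$ but not $\Gamma^\alpha(v,G)$ itself; the ghost weight is precisely what bridges this gap, since the good term it generates already carries the factor $\jbtr^{-2}$ rather than requiring a weight on the undifferentiated solution.
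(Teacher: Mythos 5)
Your proposal is correct and follows essentially the same route as the paper: generalized energy method with $\Gamma$-commutation, weighted Sobolev decay, the special-quantity and weighted $L^2$ estimates closing $X_k\lesssim E_k^{1/2}$, the refined pressure bound exhibiting $\nabla p$ as a nonlocal null form, and Alinhac's ghost weight whose good term absorbs the undifferentiated ``good'' combinations $\Gamma^\alpha v+\Gamma^\alpha G\omega$ and $\Gamma^\alpha G\omega^\perp$. The only (immaterial) difference is at the very end: the paper integrates the Riccati-type inequality $\widetilde E_k'\le C_0\jbt^{-1}\widetilde E_k^{3/2}$ directly, whereas you linearize via the bootstrap assumption and apply Gronwall; both yield the $\exp(C_0/\epsilon)$ lifespan.
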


\section{$L^\infty$ Decay Estimate}
\label{ellinfty}

In this section we derive several weighted $L^\infty-L^2$ Sobolev imbedding inequalities.
These will be useful in proving decay of solutions in $L^\infty$.

\begin{lem}\label{rgsob}
For all radial functions $f\in H^1(\mathbb{R}^2)$, $\lambda=1,2$, there holds
\begin{align}
\label{rgsob0}
r^\lambda|f(r)|^2&\lesssim \|r^{\lambda-1}\partial_rf\|_{L^2}^2+\|f\|_{L^2}^2\\
\label{rgsob1}
r\langle t-r\rangle^\lambda|f(r)|^2&\lesssim \|\langle t-r\rangle\partial_rf\|_{L^2}^2+\|\jbtr^{\lambda-1}f\|^2_{L^2},
\end{align}
provided that the right-hand side is finite.
\end{lem}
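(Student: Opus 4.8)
The plan is to treat both inequalities as consequences of the fundamental theorem of calculus applied to the quantity $r^\lambda w(r)^2$ (respectively $r\langle t-r\rangle^\lambda w(r)^2$), integrated from a point $r$ out to infinity, where decay at infinity comes from $f \in H^1$. First I would establish \eqref{rgsob0}. For a radial function, $|f(r)|^2 r^\lambda = -\int_r^\infty \partial_\rho\big(\rho^\lambda |f(\rho)|^2\big)\,d\rho = -\int_r^\infty\big(\lambda \rho^{\lambda-1}|f|^2 + 2\rho^\lambda f\,\partial_\rho f\big)\,d\rho$. The first term is bounded by $\|f\|_{L^2}^2$ up to a constant once we recall that in $\rr^2$ the $L^2$ norm of a radial function is $\|f\|_{L^2}^2 \simeq \int_0^\infty |f(\rho)|^2 \rho\,d\rho$, so $\int_r^\infty \rho^{\lambda-1}|f|^2\,d\rho \le \int_0^\infty |f|^2\rho\,d\rho \simeq \|f\|_{L^2}^2$ for $\lambda = 1, 2$ (using $\rho^{\lambda-2}\le \rho^{-1}$ on the relevant range only needs care for $\lambda=1$, where $\rho^{\lambda-2}=\rho^{-1}$ exactly). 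For the second term, Cauchy--Schwarz gives $\int_r^\infty \rho^\lambda |f||\partial_\rho f|\,d\rho = \int_r^\infty (\rho^{\lambda-1}|\partial_\rho f|)(\rho\,|f|)\,\rho^{-1}\,d\rho$... more cleanly, write it as $\int_r^\infty (\rho^{\lambda-1}\partial_\rho f)\cdot(\rho^{1/2-(\lambda-1)/2}\cdots)$; the clean split is $\rho^\lambda = \rho^{\lambda-1}\cdot\rho$ is not symmetric, so instead bound $2\rho^\lambda|f||\partial_\rho f| \le \rho^{2(\lambda-1)}|\partial_\rho f|^2 \cdot \rho^{2-\lambda} + |f|^2\rho^{\lambda}$ — this is circular, so the right move is Cauchy--Schwarz directly on the integral: $\int_r^\infty \rho^\lambda|f||\partial_\rho f|\,d\rho \le \Big(\int_r^\infty \rho^{2\lambda-2-1}\rho^{2}\cdots\Big)$. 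To avoid this bookkeeping in the sketch: apply Cauchy--Schwarz as $\int (\rho^{\lambda-1}|\partial_\rho f|\cdot\rho^{1/2})(\rho^{1/2}|f|)\,d\rho \le \|\rho^{\lambda-1}\partial_\rho f\|_{L^2(\rho\,d\rho)}\,\|f\|_{L^2(\rho\,d\rho)}$, which is exactly $\|\rho^{\lambda-1}\partial_\rho f\|_{L^2}\|f\|_{L^2} \lesssim$ RHS by Young's inequality. This proves \eqref{rgsob0}.

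For \eqref{rgsob1} I would run the same argument with the weight $r\langle t-r\rangle^\lambda$ in place of $r^\lambda$, keeping $t$ fixed. Thus $r\langle t-r\rangle^\lambda |f(r)|^2 = -\int_r^\infty \partial_\rho\big(\rho\langle t-\rho\rangle^\lambda |f(\rho)|^2\big)\,d\rho$. Differentiating the weight produces three terms: $\langle t-\rho\rangle^\lambda|f|^2$ (from $\partial_\rho \rho$), a term $\rho\cdot \lambda\langle t-\rho\rangle^{\lambda-2}(\rho-t)|f|^2$ of size $\lesssim \rho\langle t-\rho\rangle^{\lambda-1}|f|^2$ (from $\partial_\rho\langle t-\rho\rangle^\lambda$), and $2\rho\langle t-\rho\rangle^\lambda f\,\partial_\rho f$ (from $\partial_\rho|f|^2$). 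The first term integrates against $d\rho$ to $\lesssim \|\langle t-\rho\rangle^{\lambda/2}f\|_{L^2(d\rho)}^2$, which I would dominate by $\|\langle t-r\rangle^{\lambda-1}f\|_{L^2}^2$ — here the issue is the measure $d\rho$ versus $\rho\,d\rho$; near $\rho = 0$ this is harmless (the range $\rho$ small contributes $\langle t\rangle^\lambda$ times a small-$\rho$ integral which is controlled since $\lambda \le 2$), and for $\rho$ bounded away from $0$ the two measures are comparable. The second term integrates to $\lesssim \|\langle t-r\rangle^{(\lambda-1)/2} f\|_{L^2(\rho\,d\rho)}^2 = \|\langle t-r\rangle^{(\lambda-1)/2}f\|_{L^2}^2$, again controlled since $(\lambda-1)/2 \le \lambda - 1$ for $\lambda \ge 1$. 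The cross term, by Cauchy--Schwarz in $L^2(\rho\,d\rho)$ and Young, is $\lesssim \|\langle t-r\rangle\,\partial_\rho f\|_{L^2}^2 + \|\langle t-r\rangle^{\lambda-1}f\|_{L^2}^2$ provided we pair $\langle t-\rho\rangle^\lambda = \langle t-\rho\rangle \cdot \langle t-\rho\rangle^{\lambda-1}$ and split accordingly. Summing the three contributions yields the claimed bound.

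The only genuinely delicate point—and the one I expect to absorb most of the care—is the matching of Lebesgue measures $d\rho$ against the natural radial measure $\rho\,d\rho$ when estimating the terms coming from $\partial_\rho(\rho\langle t-\rho\rangle^\lambda)$, together with the behavior near $\rho = 0$ where the weight $\langle t-\rho\rangle^\lambda$ is large (of size $\langle t\rangle^\lambda$). The resolution is that those terms always carry at least one factor of $\rho$ from the original weight or at least come with a favorable power; one checks that for $\lambda \in \{1,2\}$ every exponent works out so that the right-hand side of \eqref{rgsob1}, namely $\|\langle t-r\rangle \partial_r f\|_{L^2}^2 + \|\langle t-r\rangle^{\lambda-1}f\|_{L^2}^2$, dominates each piece. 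I would also note at the outset that it suffices to prove the inequalities for smooth compactly supported $f$ and then pass to the limit, which legitimizes all the integrations by parts and the vanishing of boundary terms at $\rho = \infty$.
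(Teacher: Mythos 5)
Your overall strategy (fundamental theorem of calculus from $r$ to $\infty$, then Cauchy--Schwarz in the radial measure) is the right one, but you differentiate the \emph{entire} weight under the integral sign, and the lower-order terms this generates are exactly where your argument breaks. For \eqref{rgsob0} with $\lambda=1$, differentiating $\rho^\lambda$ produces $\int_r^\infty|f(\rho)|^2\,d\rho$, and your claimed bound $\int_r^\infty\rho^{\lambda-1}|f|^2\,d\rho\le\int_0^\infty|f|^2\rho\,d\rho$ is false: for $\rho<1$ one has $d\rho\gg\rho\,d\rho$, and a bump of height $h$ supported on $[\epsilon,2\epsilon]$ gives $\int|f|^2\,d\rho\sim h^2\epsilon$ versus $\|f\|_{L^2(\rr^2)}^2\sim h^2\epsilon^2$. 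The parenthetical ``$\rho^{\lambda-2}=\rho^{-1}$ exactly'' does not resolve this. The same defect recurs, amplified, in \eqref{rgsob1}: differentiating the factor $\rho$ leaves you with $\int_r^\infty\langle t-\rho\rangle^\lambda|f|^2\,d\rho$, where near $\rho=0$ the weight is of size $\langle t\rangle^\lambda$ and the measure mismatch is again unfavorable; your assertion that this region ``is harmless\dots since $\lambda\le 2$'' is not an argument, and this term is precisely the crux of the estimate.

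The fix---and what the paper does---is not to differentiate the part of the weight that is nondecreasing in $\rho$. Pull $r^\lambda$ (resp.\ $r$) outside the integral, apply the fundamental theorem of calculus only to $|f(\rho)|^2$ (resp.\ only to $\langle t-\rho\rangle^\lambda|f(\rho)|^2$), and only afterwards use $r\le\rho$ on the domain of integration to convert $r^\lambda\,d\rho$ into $\rho^{\lambda-1}\cdot\rho\,d\rho$ (resp.\ $r\,d\rho$ into $\rho\,d\rho$). This produces no zeroth-order term at all in \eqref{rgsob0}, and in \eqref{rgsob1} the only lower-order term is $\rho\,\langle t-\rho\rangle^{\lambda-1}|f|^2\,d\rho\le\langle t-\rho\rangle^{2(\lambda-1)}|f|^2\,\rho\,d\rho$ (using $\lambda\ge1$), which is exactly the second term on the right-hand side. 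Your problematic terms could in principle be rescued by a second application of the same FTC-plus-Fubini device (for instance $\int_0^\infty|f|^2\,d\rho\le 2\|f\|_{L^2(\rho\,d\rho)}\|\partial_rf\|_{L^2(\rho\,d\rho)}$), but you neither state nor carry out such an argument, and with the correct setup it is unnecessary.
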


\begin{proof}
It suffices to show the lemma for $f \in
C_0^1(\mathbb{R}^2)$ and then use a completion argument.

The first inequality is shown as follows
\begin{align*}
r^\lambda|f(r)|^2&=-r^\lambda \int_r^\infty\partial_\rho[|f(\rho)|^2]d\rho\\
&=-r^\lambda\int_r^\infty 2f(\rho)\partial_rf(\rho)d\rho\\
&\lesssim \int_r^\infty\rho^{\lambda-1}|f(\rho)||\partial_rf(\rho)|\rho d\rho\\
&\lesssim \|r^{\lambda-1}\partial_rf\|_{L^2}\|f\|_{L^2}.
\end{align*}

The calculation for the other inequality is similar.
\begin{align*}
r\langle t-r\rangle^\lambda|f(r)|^2  &= -
  r\int_r^\infty
  \partial_\rho[\langle t-\rho\rangle^\lambda |f(\rho)|^2] d\rho\\
&\lesssim r\int_r^\infty
[\langle t-\rho\rangle^\lambda  |f(\rho)||\partial_r
  f(\rho)|+\langle t-\rho\rangle^{\lambda-1} |f(\rho)|^2]  d\rho\\
&\lesssim \int_r^\infty
[\langle t-\rho\rangle^2  |\partial_r
  f(\rho)|^2+\langle t-\rho\rangle^{2(\lambda-1)} |f(\rho)|^2]  \rho d\rho\\
  &= \|\jbtr\partial_rf\|_{L^2}^2+\|\jbtr^{\lambda-1} f\|_{L^2}^2.
\end{align*}
\end{proof}

The next lemma is just the Sobolev imbedding theorem $H^1
\hookrightarrow L^\infty$ on the circle $\mathbb{S}^1$.

\begin{lem}\label{lem3-4}
For each $x = r\omega \in \mathbb{R}^2$ and $f(r\omega) \in
H^1(\mathbb{S}^1)$, there holds
\begin{equation}\nonumber
|f(r\omega)|^2 \lesssim \sum_{a = 0,
1}\int_{\mathbb{S}^1}|\Omega^a f(r\omega)|^2d\sigma.
\end{equation}
\end{lem}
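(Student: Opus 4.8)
The statement is the standard Sobolev embedding $H^1(\mathbb{S}^1)\hookrightarrow L^\infty(\mathbb{S}^1)$, applied fiberwise at each fixed radius $r$. The plan is to reduce it to a one-variable fundamental-theorem-of-calculus estimate on the circle. Parametrize $\mathbb{S}^1$ by angle $\theta\in[0,2\pi)$, writing $\omega=\omega(\theta)=(\cos\theta,\sin\theta)$, and note that the rotation operator acts as $\Omega=\partial_\theta$ when restricted to the circle of radius $r$ (since $\Omega=x^\perp\cdot\nabla$ generates rotations). So for fixed $r$ the function $g(\theta):=f(r\omega(\theta))$ satisfies $g'(\theta)=(\Omega f)(r\omega(\theta))$.

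First I would fix $\theta_0,\theta_1\in[0,2\pi)$ and write, by the fundamental theorem of calculus, $g(\theta_0)^2-g(\theta_1)^2=\int_{\theta_1}^{\theta_0}2g(\theta)g'(\theta)\,d\theta$, hence $|g(\theta_0)|^2\le |g(\theta_1)|^2+2\int_0^{2\pi}|g(\theta)||g'(\theta)|\,d\theta$. Next I would average this inequality over $\theta_1\in[0,2\pi)$ to remove the dependence on the (arbitrary) base point, which gives
\[
|g(\theta_0)|^2\lesssim \int_0^{2\pi}|g(\theta)|^2\,d\theta+\int_0^{2\pi}|g(\theta)||g'(\theta)|\,d\theta.
\]
Applying Cauchy--Schwarz to the last integral, $\int|g||g'|\le\tfrac12\int|g|^2+\tfrac12\int|g'|^2$, and then translating back via $d\sigma$ being (up to the harmless constant factor $r$, or after normalizing arclength) the measure $d\theta$, yields $|f(r\omega)|^2\lesssim \int_{\mathbb{S}^1}|f(r\omega)|^2\,d\sigma+\int_{\mathbb{S}^1}|\Omega f(r\omega)|^2\,d\sigma$, which is exactly the claim with the sum over $a=0,1$.

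There is essentially no obstacle here; the only point requiring a word of care is the identification $\Omega|_{\text{circle}}=\partial_\theta$ and the fact that the arclength measure $d\sigma$ on the circle of radius $r$ differs from $d\theta$ only by the factor $r$, which does not affect a bound stated up to an implicit constant (and in any case the inequality is scale-invariant in the relevant sense). One should also note $f(r\omega)\in H^1(\mathbb{S}^1)$ is precisely the hypothesis needed to make both integrals on the right finite, and a density argument reduces to smooth $f$ if one prefers to justify the fundamental theorem of calculus step rigorously.
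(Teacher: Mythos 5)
Your proof is correct and follows essentially the same route as the paper: both reduce to the one-dimensional Sobolev embedding on the circle via the fundamental theorem of calculus applied to $|f|^2$ along $\theta$, followed by Cauchy--Schwarz. The only cosmetic difference is how the arbitrary base point is handled --- you average over it, while the paper writes $1=\cos^2\theta+\sin^2\theta$ and integrates from the points where each factor vanishes --- and either device works.
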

\begin{proof}
Write $\omega = (\cos\theta, \sin\theta)$. We have
\begin{align*}
|f(r\omega)|^2 &= |f(r\omega)|^2\cos^2\theta +
  |f(r\omega)|^2\sin^2\theta\\
&= \int_{\frac{\pi}{2}}^\theta\frac{d}{d\phi}\big[|f(r\cos\phi,
  r\sin\phi)|^2\cos^2\phi\big] d\phi\\
&\quad +\ \int_{0}^\theta\frac{d}{d\phi}\big[|f(r\cos\phi,
  r\sin\phi)|^2\sin^2\phi\big] d\phi\\
&\lesssim \int_{\mathbb{S}^1}[|\Omega f(r\omega)||f(r\omega)|+|f(r\omega)|^2]d\sigma,
\end{align*}
from which  the result follows by  the Cauchy-Schwarz inequality.
\end{proof}

These two results combine to yield
\begin{lem}\label{gsob}
For all $f\in H^2(\rr^2)$, $\lambda=1,2$, there holds
\begin{align}
\label{gsob0}
r^{\lambda}|f(x)|^2&\lesssim \sum_{a=0,1}\|r^{\lambda-1}\partial_r \Omega^a f\|_{L^2}^2+\|\Omega^a f\|_{L^2}^2\\
\label{gsob1}
r\la t-r\ra^\lambda|f(x)|^2&\lesssim \sum_{a=0,1}[\|\la t-r\ra \partial_r \Omega^a f\|_{L^2}^2+\|\jbtr^{\lambda-1}\Omega^a f\|_{L^2}^2],
\end{align}
provided the right-hand side is finite.
\end{lem}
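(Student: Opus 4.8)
The plan is to combine the two preceding lemmas by freezing the radial variable $r$ and applying the circle estimate of Lemma \ref{lem3-4} at each fixed $r$, then integrating the resulting radial inequality against the purely radial weights supplied by Lemma \ref{rgsob}. More precisely, for fixed $r$ the function $\omega \mapsto f(r\omega)$ lies in $H^1(\mathbb{S}^1)$, so Lemma \ref{lem3-4} gives $|f(r\omega)|^2 \lesssim \sum_{a=0,1}\int_{\mathbb{S}^1}|\Omega^a f(r\omega)|^2\,d\sigma$. Multiplying by the weight $r^\lambda$ (respectively $r\la t-r\ra^\lambda$) and using that this weight is constant on the circle of radius $r$, it remains to bound, for each $a=0,1$, a quantity of the form $r^\lambda\!\int_{\mathbb{S}^1}|g(r\omega)|^2\,d\sigma$ (with $g=\Omega^a f$) by the claimed $L^2(\rr^2)$ norms.

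The second step is to regard $\rho \mapsto \int_{\mathbb{S}^1}|g(\rho\omega)|^2\,d\sigma$ as (essentially) the square of a radial function and run the one-dimensional computation from Lemma \ref{rgsob} verbatim: write $r^\lambda\!\int_{\mathbb{S}^1}|g(r\omega)|^2 d\sigma = -r^\lambda\!\int_r^\infty \partial_\rho\big[\rho^{?}\cdots\big]$, integrate by parts in $\rho$, and distribute the resulting $\rho\,d\rho d\sigma$ to form the area element. Carrying this out produces $\|r^{\lambda-1}\partial_r g\|_{L^2(\rr^2)}^2 + \|g\|_{L^2(\rr^2)}^2$ for \eqref{gsob0} and $\|\la t-r\ra \partial_r g\|_{L^2(\rr^2)}^2 + \|\jbtr^{\lambda-1} g\|_{L^2(\rr^2)}^2$ for \eqref{gsob1}, where $g=\Omega^a f$. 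Since $\partial_r$ commutes with $\Omega$, $\partial_r \Omega^a f = \Omega^a \partial_r f$ up to relabeling, and summing over $a=0,1$ gives exactly the right-hand sides stated. The hypothesis $f\in H^2(\rr^2)$ guarantees $\Omega^a f \in H^1$ so that the circle imbedding and the radial integration by parts are both justified, with a density argument reducing to $f\in C_0^2$.

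The only mild subtlety — and the place where a little care is needed rather than a genuine obstacle — is the interchange of the angular integration with the radial integration by parts and the verification that no boundary terms at $\rho=\infty$ survive; this is handled exactly as in the proof of Lemma \ref{rgsob} by first working with compactly supported smooth $f$. One should also note that the $\Omega$-derivative in Lemma \ref{lem3-4} is the scalar rotation operator, so when $f$ is vector- or matrix-valued one applies the estimate componentwise, which changes nothing since $\widetilde\Omega$ and $\Omega$ differ by a bounded zeroth-order term that is absorbed into the $\|\Omega^a f\|_{L^2}$-type terms on the right. I therefore expect the proof to be short: cite Lemma \ref{lem3-4} to reduce to a radial quantity, then reproduce the integration-by-parts argument of Lemma \ref{rgsob} with the area element, and finally restore the vector fields using $[\partial_r,\Omega]=0$.
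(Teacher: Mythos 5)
Your proposal is correct and follows exactly the paper's (one-line) argument: apply the circle imbedding of Lemma \ref{lem3-4} first and then the radial integration-by-parts estimate of Lemma \ref{rgsob} to the resulting integrand, the two orders of integration being interchangeable by Fubini. The remarks about $[\partial_r,\Omega]=0$, the density reduction to $C_0^2$, and the componentwise treatment of $\widetilde\Omega$ match the paper's proof and its accompanying remark.
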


\begin{proof}
First apply Lemma \ref{lem3-4} to the left-hand side, then apply Lemma \ref{rgsob} to the integrand.
\end{proof}

\begin{rem}
The estimates of Lemma \ref{gsob} hold in the vector- and matrix-valued cases
using $\widetilde\Omega$ in place of $\Omega$.  This can be seen by applying Lemma \ref{gsob} component-wise
and the using the fact that,  for example, $|\Omega v|
\le |\widetilde\Omega v|+|v|$, for vectors $v$.
\end{rem}

\begin{rem}
The case $\lambda=2$ in \eqref{gsob1} will be applied only to derivatives $\nabla f$.
\end{rem}

\begin{lem}
\label{standard-sob}
For all $f\in H^2(\rr^2)$,  there holds
\[
\jbt \| f\|_{L^\infty(r\le\jbtt)}\lesssim \sum_{|\alpha|\le2}\|\jbtr\partial^\alpha f\|_{L^2},
\]
provided the right-hand side is finite.
\end{lem}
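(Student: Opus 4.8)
The plan is to reduce this to the radial/angular Sobolev embedding lemmas already proven, localizing to the region $r \le \langle t\rangle/2$ where $\langle t-r\rangle \sim \langle t\rangle$. First I would fix $x = r\omega$ with $r \le \langle t/2\rangle$ and apply Lemma \ref{lem3-4} on the circle of radius $r$, which gives $|f(r\omega)|^2 \lesssim \sum_{a=0,1}\int_{\mathbb{S}^1}|\Omega^a f(r\sigma)|^2\,d\sigma$. Then for each fixed angle I would apply the one-dimensional estimate \eqref{rgsob0} (or a variant of \eqref{rgsob1}) to the radial function $\rho \mapsto \Omega^a f(\rho\omega)$ to pull out a power of $r$; the point is that on the relevant set $r^{1/2}\langle t\rangle^{1/2} \gtrsim \langle t\rangle$ is false, so instead one wants the weight $\langle t-r\rangle$, which on $r \le \langle t/2\rangle$ satisfies $\langle t-r\rangle \gtrsim \langle t\rangle$.

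The cleaner route is to invoke Lemma \ref{gsob}, specifically \eqref{gsob1} with $\lambda = 1$: for $r \le \langle t/2 \rangle$ one has $r\langle t-r\rangle |f(x)|^2 \lesssim \sum_{a=0,1}\big[\|\langle t-r\rangle \partial_r\Omega^a f\|_{L^2}^2 + \|\Omega^a f\|_{L^2}^2\big]$. However this still has the unwanted factor $r$ on the left, which degenerates as $r \to 0$. To handle small $r$ one needs an additional argument near the origin: there the standard 2-D Sobolev embedding $H^2(\rr^2) \hookrightarrow L^\infty$ applied on, say, the unit ball, gives $|f(x)|^2 \lesssim \sum_{|\alpha|\le 2}\|\partial^\alpha f\|_{L^2(|x|\le 1)}^2$, and on that ball $\langle t-r\rangle \sim \langle t\rangle$, so $\langle t\rangle^2 |f(x)|^2 \lesssim \sum_{|\alpha|\le 2}\|\langle t-r\rangle \partial^\alpha f\|_{L^2}^2$. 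For $1 \le r \le \langle t/2\rangle$ one uses the weighted estimate above, where the factor $r \gtrsim 1$ is harmless, together with $\langle t-r\rangle \sim \langle t\rangle$, and converts the $\Omega$-derivatives into ordinary derivatives $\partial^\alpha$ via $|\Omega g| \lesssim r|\nabla g| \lesssim \langle t\rangle |\nabla g|$ — but one must be careful that this introduces an extra $\langle t\rangle$, so instead it is better to keep $\Omega$ and note $\Omega f$, $\Omega^2 f$ are controlled by $\partial^\alpha f$ with $|\alpha| \le 2$ only after pairing with the weight $\langle t-r\rangle \lesssim \langle r\rangle \lesssim r$ in that region, i.e. $|\Omega^a f| \lesssim \sum_{|\beta|\le a}\langle t-r\rangle |\partial^\beta f| + \dots$ is false; the correct bookkeeping is $|\Omega f(x)| = |x^\perp \cdot \nabla f| = r|\nabla f|$, and $r \lesssim \langle t-r\rangle$ only fails, so one actually writes $r \le \langle t/2\rangle \le \langle t\rangle$ and absorbs — here I would simply cite Lemma \ref{gsob} with the observation $r \lesssim \langle t\rangle$ and $\langle t-r\rangle \sim \langle t\rangle$ and note $\|\langle t-r\rangle \partial_r \Omega^a f\|_{L^2} \lesssim \sum_{|\alpha|\le 2}\|\langle t-r\rangle \partial^\alpha f\|_{L^2}$ since $\partial_r \Omega^a$ for $a \le 1$ is a combination of $\partial^\alpha$, $|\alpha|\le 2$, with coefficients bounded by $\langle r\rangle / r \lesssim 1$ away from the origin.

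Assembling: split $\rr^2 = \{r \le 1\} \cup \{1 \le r \le \langle t/2\rangle\}$, use the plain Sobolev embedding on the first piece and Lemma \ref{gsob} (case $\lambda = 1$, using $r \ge 1$ to drop the $r$-degeneracy and $\langle t-r\rangle \sim \langle t\rangle$) on the second, then take the supremum and a square root. The only mild subtlety — and the place that needs care rather than difficulty — is the behavior near $r = 0$ in the angular/radial decomposition, since $\Omega$ and the $1/r$ weights in \eqref{der-decomp} are singular there; this is precisely why the region must be broken at $r \sim 1$ and the isotropic $H^2 \hookrightarrow L^\infty$ bound invoked near the origin. Everything else is a matter of tracking that $\langle t-r\rangle \sim \langle t\rangle = \langle t/2\rangle \cdot O(1)$ throughout $r \le \langle t/2\rangle$.
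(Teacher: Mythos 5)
Your splitting at $r\sim 1$ does not close on the outer piece $1\le r\le\jbtt$, and that is where the content of the lemma lies. With $\lambda=1$, estimate \eqref{gsob1} reads $r\jbtr\,|f(x)|^2\lesssim\sum_{a=0,1}\big[\|\jbtr\,\partial_r\Omega^af\|_{L^2}^2+\|\Omega^af\|_{L^2}^2\big]$, and on $1\le r\le\jbtt$ the left-hand side is bounded below only by $\jbt\,|f(x)|^2$ (e.g.\ at $r\sim1$, or more generally $r\jbtr\sim r\jbt\ll\jbt^2$ whenever $r\ll\jbt$). So even granting the right-hand side, "using $r\ge1$ to drop the $r$-degeneracy" yields only $\jbt^{1/2}\|f\|_{L^\infty}\lesssim\cdots$, short of the claimed bound by a factor $\jbt^{1/2}$. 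Moreover the right-hand side is not admissible either: your assertion that $\partial_r\Omega^a f$, $a\le1$, is a combination of $\partial^\alpha f$, $|\alpha|\le2$, with bounded coefficients is false, since $\Omega=x_2\partial_1-x_1\partial_2$ carries coefficients of size $r$; thus $|\Omega f|\le r|\nabla f|$ and $|\partial_r\Omega f|\lesssim r|\nabla^2f|+|\nabla f|$, and because the norms in Lemma \ref{gsob} are taken over all of $\rr^2$ (its proof integrates from $r$ to spatial infinity), the factor $r$ is unbounded there and cannot be discarded or replaced by $\jbt$. Any attempt to trade it (e.g.\ $r\lesssim\jbt\jbtr$, or $r\lesssim\jbt$ after an ad hoc localization that Lemma \ref{gsob} does not provide) costs at least a full power of $\jbt$ and wipes out the decay; taking $\lambda=2$ instead makes this worse, since the weight $\jbtr$ then also multiplies $\Omega f$. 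In short, the rotational-field machinery of Lemmas \ref{rgsob}--\ref{gsob} produces $(r\jbtr)^{-1/2}$--type pointwise decay, which is genuinely weaker than $\jbt^{-1}$ in the interior region this lemma addresses.

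The repair is to notice that your argument near the origin already proves the whole lemma: no vector fields are needed on $r\le\jbtt$. This is the paper's proof. Take $\varphi\in C_0^\infty$ with $\varphi(s)=1$ for $s\le1$, $\varphi(s)=0$ for $s\ge3/2$, and apply the standard embedding $H^2(\rr^2)\hookrightarrow L^\infty(\rr^2)$ to $\varphi(r/\jbtt)f$, exactly as you did on the unit ball but with the cutoff at scale $\jbtt$; its derivatives are $O(\jbtt^{-1})=O(1)$, and on its support $r\le\tfrac32\jbtt$ one has $\jbt\lesssim\jbtr$, so for $|x|\le\jbtt$,
\begin{equation*}
\jbt|f(x)|\lesssim\jbt\sum_{|\alpha|\le2}\|\partial^\alpha f\|_{L^2(r\le\frac32\jbtt)}\lesssim\sum_{|\alpha|\le2}\|\jbtr\,\partial^\alpha f\|_{L^2},
\end{equation*}
which is the desired estimate. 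The angular/radial lemmas are reserved for the exterior region $r\gtrsim\jbt$, where $r\jbtr$ is comparable to $\jbt\jbtr$ and they do give the right decay.
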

\begin{proof}
Let $\varphi\in C_0^\infty$, satisfy $\varphi(s)=1$ for $s\le 1$, $\varphi(s)=0$ for $s\ge3/2$.
Note that $\jbt\lesssim\jbtr$ on $\mbox{supp}\;\varphi(r/\jbtt)$.  Thus, for $|x|\le\jbtt$, we have
by Sobolev imbedding $H^2(\rr^2)\hookrightarrow L^\infty(\rr^2)$ that
\begin{align*}
\jbt|f(x)|
&=\jbt\varphi(r/\jbtt)|f(x)|\\
&\lesssim \jbt  \|\varphi(r/\jbtt)f\|_{H^2}\\
&\lesssim \jbt\sum_{|\alpha|\le2}\|\partial^\alpha f\|_{L^2(r\le \frac32\jbtt)}\\
&\lesssim \sum_{|\alpha|\le2}\|\jbtr\partial^\alpha f\|_{L^2}.
\end{align*}
\end{proof}

\begin{rem}
This result will only be applied to derivatives $\nabla f$.
\end{rem}

\section{Commutation}
\label{commutation}
For any multi-index $\alpha$, we have the following commutation
properties when applying $\Gamma^\alpha$ to the equation
\eqref{Elas} (see, for instance, \cite{ST07, Kessenich} for
details).  This will be essential in all of the subsequent estimations.
\begin{equation}\label{1}
\begin{cases}
\partial_t\Gamma^\alpha v - \nabla\cdot\Gamma^\alpha G =  - \nabla\Gamma^\alpha p + f_\alpha, \\[-4mm]\\
\partial_t\Gamma^\alpha G  - \nabla\Gamma^\alpha v = g_\alpha,\\[-4mm]\\
\nabla\cdot\Gamma^\alpha v = 0,\quad \nabla\cdot\Gamma^\alpha G^\top
= 0,
\end{cases}
\end{equation}
where
\begin{equation}\label{2-5}
\begin{cases}
f_\alpha =
  \sum\limits_{\beta + \gamma = \alpha}[-\Gamma^\beta v\cdot\nabla \Gamma^\gamma v
  + \nabla\cdot(\Gamma^\gamma G \Gamma^\beta G^\top)],\\[-4mm]\\
g_\alpha = \sum\limits_{\beta + \gamma = \alpha}
  [-\Gamma^\beta v\cdot\nabla \Gamma^\gamma
  G + \nabla\Gamma^\gamma v \Gamma^\beta G],
\end{cases}
\end{equation}
From \eqref{derconstr}, we also have
\begin{equation}\label{1-1}
\nabla^\perp\cdot \Gamma^\alpha G
= h_\alpha,
\end{equation}
where
\begin{equation}\label{2-7}
(h_\alpha)_i = \sum_{\beta + \gamma = \alpha} \big[\Gamma^\beta
G_{m1}\partial_m\Gamma^\gamma G_{i2} - \Gamma^\beta
G_{m2}\partial_m\Gamma^\gamma G_{i1}\big].
\end{equation}

\section{Bound for the Pressure Gradient}
\label{pressure}
The following Lemma shows that the pressure gradient
may be treated as a nonlinear term.
The first estimate appeared
in \cite{ST07}.  The second is a
 novel refinement which saves one
derivative over the first bound  and  which allows us to exploit the null structure.  This is essential
in Section \ref{ghostenergy} when we estimate the ghost weighted energy.

\begin{lem}\label{lem4-1}
Let $(v,F)=(v,I+G)$, $(v, G) \in H^k_\Lambda(T)$,  solve the equation \eqref{Elas}
and  the constraint \eqref{derconstr}.
Then we have
\begin{align}
\label{pressure-1}
&\|\nabla\Gamma^\alpha p\|_{L^2} \lesssim \| f_\alpha\|_{L^2}\\
\label{pressure-2}
&\|\nabla\Gamma^\alpha p\|_{L^2} \lesssim
\sum_{\tiny\begin{matrix}\beta+\gamma=\alpha\\ |\beta|\le|\gamma|\end{matrix} }
\|\partial_j\Gamma^\beta v_i\Gamma^\gamma v_j
-\partial_j\Gamma^\beta G_{ik}\Gamma^\gamma G_{jk}\|_{L^2},
\end{align}
for all $|\alpha|\le k-1$.
\end{lem}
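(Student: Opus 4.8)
The plan is to derive an elliptic equation for $\Gamma^\alpha p$ by taking the divergence of the first equation in \eqref{1} and then to invert the Laplacian carefully, exploiting the constraints to rewrite the right-hand side in a form exhibiting null structure. Applying $\nabla\cdot$ to the momentum equation $\partial_t\Gamma^\alpha v - \nabla\cdot\Gamma^\alpha G = -\nabla\Gamma^\alpha p + f_\alpha$ and using $\nabla\cdot\Gamma^\alpha v = 0$ and $\nabla\cdot\Gamma^\alpha G^\top = 0$ (so that $\partial_i\partial_k\Gamma^\alpha G_{ik} = \partial_k(\nabla\cdot\Gamma^\alpha G^\top)_k = 0$), one finds $-\Delta\Gamma^\alpha p = \nabla\cdot f_\alpha - \partial_i\partial_k\Gamma^\alpha G_{ik} = \nabla\cdot f_\alpha$, hence $\nabla\Gamma^\alpha p = -\nabla\Delta^{-1}\nabla\cdot f_\alpha$. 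Since $\nabla\Delta^{-1}\nabla\cdot$ is a matrix of Riesz transforms, bounded on $L^2(\rr^2)$, the first estimate \eqref{pressure-1} follows immediately. This part is routine.

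The substance is \eqref{pressure-2}. Here the goal is to avoid losing a derivative: $f_\alpha$ as written in \eqref{2-5} contains terms like $\nabla\cdot(\Gamma^\gamma G\,\Gamma^\beta G^\top)$ with a derivative falling on the highest-order factor, and $\Gamma^\beta v\cdot\nabla\Gamma^\gamma v$ likewise. The idea is to move the outer divergence from the elliptic inversion onto these products \emph{before} the derivative count becomes unfavorable, i.e.\ to write $\nabla\Gamma^\alpha p = -\nabla\Delta^{-1}\,(\text{something that is itself a sum of products, not a derivative of a product})$. Concretely, in $\nabla\cdot f_\alpha$ each summand is a second-order operator applied to a product of $\Gamma^\beta$ and $\Gamma^\gamma$ factors; expanding by Leibniz and integrating by parts inside $\Delta^{-1}$, one reorganizes $\nabla\Delta^{-1}\nabla\cdot f_\alpha$ into $\nabla\Delta^{-1}$ applied to terms of the form $\partial_j\Gamma^\beta(\cdot)\,\Gamma^\gamma(\cdot)$ (each factor carrying at most one derivative beyond $\Gamma^\beta,\Gamma^\gamma$), which are again handled by $L^2$-boundedness of the resulting singular integral operators of order zero. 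The key algebraic point is that after using the incompressibility constraints $\partial_iv_i=0$, $\partial_iG_{ik}=0$ and the compatibility constraint \eqref{derconstr}, the quadratic expression collapses precisely to the null form $\partial_j\Gamma^\beta v_i\,\Gamma^\gamma v_j - \partial_j\Gamma^\beta G_{ik}\,\Gamma^\gamma G_{jk}$; for instance $v_j\partial_jv_i = \partial_j(v_jv_i)$ by $\nabla\cdot v=0$, and $\nabla\cdot(GG^\top)$ is rewritten using $\nabla\cdot G^\top = 0$ together with \eqref{derconstr} to trade the "wrong" derivatives for the displayed ones. One then symmetrizes in $\beta\leftrightarrow\gamma$ to arrange $|\beta|\le|\gamma|$.

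The main obstacle is precisely this bookkeeping: one must track which derivative lands on which factor after commuting $\nabla\Delta^{-1}\nabla\cdot$ past the Leibniz expansion, and verify that every term can be grouped so that the top-order factor $\Gamma^\gamma$ is \emph{undifferentiated} while the lower-order factor $\Gamma^\beta$ carries the single extra spatial derivative $\partial_j$ — this is what "saves one derivative." The subtlety is that naive integration by parts inside $\Delta^{-1}$ could reintroduce a derivative on $\Gamma^\gamma$; the constraints are exactly what allow the offending terms to be rewritten, and one has to check the Hookean structure $\nabla\cdot(FF^\top) - \nabla p$ conspires so that the residual is genuinely the stated bilinear null form with no leftover top-order-derivative-on-$\Gamma^\gamma$ pieces. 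Once the integrand is in that shape, the zeroth-order singular integral bound on $L^2(\rr^2)$ closes the estimate for all $|\alpha|\le k-1$.
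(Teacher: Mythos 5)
Your proposal is correct and follows essentially the same route as the paper: one shows $\Delta\Gamma^\alpha p=\nabla\cdot f_\alpha$, rewrites this as a double divergence $\partial_i\partial_j$ of the quadratic expression $\Gamma^\beta v_i\Gamma^\gamma v_j-\Gamma^\beta G_{ik}\Gamma^\gamma G_{jk}$ using the divergence-free constraints, places one of the two outer derivatives on the lower-order factor (again via $\nabla\cdot\Gamma^\gamma v=0$, $\nabla\cdot\Gamma^\gamma G^\top=0$, splitting the sum according to $|\beta|\le|\gamma|$ or not and relabeling), and concludes by $L^2$-boundedness of $\Delta^{-1}\nabla\otimes\nabla$. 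The one inessential discrepancy is your appeal to \eqref{derconstr}: it is not needed for this rearrangement, since the commuted incompressibility constraints in \eqref{1} alone suffice.
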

\begin{proof}
Taking the divergence of the first equation of \eqref{1} and then using the constraints given in third equation
of \eqref{1}, we find
\[
\Delta\Gamma^\alpha p = \nabla\cdot f_\alpha +
\nabla\cdot(\nabla\cdot \Gamma^\alpha G) - \partial_t\nabla\cdot
\Gamma^\alpha v=\nabla\cdot f_\alpha.
\]
By \eqref{2-5} and the constraint equations in \eqref{1}, we have
\begin{align*}
\nabla\cdot f_\alpha
&=-\sum_{\beta+\gamma=\alpha }\partial_i\partial_j(\Gamma^\beta v_i\Gamma^\gamma v_j
-\Gamma^\beta G_{ik}\Gamma^\gamma G_{jk})\\
&=-\sum_{\tiny\begin{matrix}\beta+\gamma=\alpha\\|\beta|\le|\gamma|\end{matrix} }\partial_i\partial_j(\Gamma^\beta v_i\Gamma^\gamma v_j
-\Gamma^\beta G_{ik}\Gamma^\gamma G_{jk})\\
&\qquad -\sum_{\tiny\begin{matrix}\beta+\gamma=\alpha\\|\beta|>|\gamma|\end{matrix} }
\partial_i\partial_j(\Gamma^\beta v_i\Gamma^\gamma v_j
-\Gamma^\beta G_{ik}\Gamma^\gamma G_{jk})\\
&=-\sum_{\tiny\begin{matrix}\beta+\gamma=\alpha\\|\beta|\le|\gamma|\end{matrix} }
\partial_i(\partial_j\Gamma^\beta v_i\Gamma^\gamma v_j
-\partial_j\Gamma^\beta G_{ik}\Gamma^\gamma G_{jk})\\
&\qquad -\sum_{\tiny\begin{matrix}\beta+\gamma=\alpha\\|\beta|>|\gamma|\end{matrix} }
\partial_j(\Gamma^\beta v_i\partial_i\Gamma^\gamma v_j
-\Gamma^\beta G_{ik}\partial_i\Gamma^\gamma G_{jk}).
\end{align*}
The result now follows since
\[
\nabla\Gamma^\alpha p=\Delta^{-1}\nabla(\nabla\cdot f_\alpha)
\]
and since $\Delta^{-1}\nabla\otimes\nabla$ is bounded in $L^2$.

\end{proof}

\section{ Estimates for Special Quantities, I}
\label{special1}
\begin{lem}
\label{spellinfty}
Suppose that $(v,F)=(v,I+G)$,
$(v, G) \in H^k_\Lambda(T)$,  solves \eqref{Elas}, \eqref{derconstr}.
Define
\[
L_k=\sum_{|\alpha|\le k}[|\Gamma^\alpha v|+|\Gamma^\alpha G|]
\]
and
\begin{equation}
\label{ndef}
N_k=\sum_{|\alpha|\le k-1}[t|f_\alpha|+t|g_\alpha|+(t+r)|h_\alpha|+t|\nabla \Gamma^\alpha p|].
\end{equation}
Then for all $|\alpha|\le k-1$,
\begin{align}
\label{spconv}
&r|\partial_r\Gamma^\alpha v\cdot\omega|\lesssim L_k\\
\label{spcong}
&r|\partial_r \Gamma^\alpha G^\top\omega|\lesssim L_k\\
\label{spdivg}
&r|\partial_r \Gamma^\alpha G\omega-\nabla \cdot\Gamma^\alpha G|\lesssim L_k\\
\label{spcurlg}
&r|\partial_r\Gamma^\alpha G\omega^\perp|\lesssim L_k+N_k\\
\label{sppde}
&(t\pm r)|\nabla\Gamma^\alpha v\pm \nabla\cdot\Gamma^\alpha G\otimes\omega|
\lesssim L_k+N_k.
\end{align}
\end{lem}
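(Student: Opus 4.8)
The plan is to extract each of these five pointwise bounds directly from the algebraic structure of the equations \eqref{1} and the constraints \eqref{derconstr}, \eqref{incomconstr}, exactly in the spirit of the method of \cite{ST06}: the idea is that the "bad" derivative — the radial derivative $\partial_r$ applied to certain projections of $v$ and $G$, and the full space-time gradient tangent to the light cone — can always be re-expressed, via the PDE or the divergence/curl constraints, in terms of either undifferentiated $\Gamma$-derivatives of the solution (which are absorbed into $L_k$) or the nonlinear terms $f_\alpha, g_\alpha, h_\alpha, \nabla\Gamma^\alpha p$ (which, after multiplication by the weight $t$ or $t\pm r$, are absorbed into $N_k$). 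Throughout I will use the decomposition \eqref{der-decomp}, namely $\nabla = \omega\,\partial_r + \tfrac{\omega^\perp}{r}\,\Omega$, so that any spatial derivative splits into a radial part and an $O(1/r)$ multiple of a rotation; the rotation part is harmless because $\Omega\Gamma^\alpha = \widetilde\Omega\Gamma^\alpha + (\text{lower order})$ contributes to $L_k$ after multiplication by $r$.

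First I would treat \eqref{spconv} and \eqref{spcong}. Applying \eqref{der-decomp} to the incompressibility constraints $\nabla\cdot\Gamma^\alpha v = 0$ and $\nabla\cdot\Gamma^\alpha G^\top = 0$ from \eqref{1}, one gets $\omega\cdot\partial_r\Gamma^\alpha v = -\tfrac{1}{r}\,\omega^\perp\cdot\Omega\Gamma^\alpha v$ and similarly for $G^\top\omega$; multiplying by $r$ and using $|\Omega\Gamma^\alpha v|\lesssim |\widetilde\Omega\Gamma^\alpha v| + |\Gamma^\alpha v|\lesssim L_k$ gives the result. For \eqref{spdivg} the target combination $\partial_r\Gamma^\alpha G\,\omega - \nabla\cdot\Gamma^\alpha G$: writing $\nabla\cdot\Gamma^\alpha G = (\partial_i\Gamma^\alpha G_{ij})$ and using \eqref{der-decomp} componentwise, the radial piece of $\nabla\cdot\Gamma^\alpha G$ is exactly $\omega_i\partial_r\Gamma^\alpha G_{ij} = (\partial_r\Gamma^\alpha G\,\omega)_j$... wait, one must be careful which index is contracted; the correct statement is that $\nabla\cdot\Gamma^\alpha G$ minus the radial derivative of $G$ acting on $\omega$ differs by an $O(1/r)$-times-$\Omega$ term, which again multiplies against $r$ into $L_k$. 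So \eqref{spconv}–\eqref{spdivg} are pure consequences of the constraints plus \eqref{der-decomp}, with no nonlinear terms entering — hence the right-hand side is $L_k$ alone.

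Next I would do \eqref{spcurlg}, the genuinely curl-type component $\partial_r\Gamma^\alpha G\,\omega^\perp$. Here the relevant identity is \eqref{1-1}: $\nabla^\perp\cdot\Gamma^\alpha G = h_\alpha$, i.e. $\partial_1\Gamma^\alpha G_{i2} - \partial_2\Gamma^\alpha G_{i1} = (h_\alpha)_i$. Decomposing each $\partial_j$ via \eqref{der-decomp} and collecting terms, the radial derivative of $\Gamma^\alpha G$ contracted with $\omega^\perp$ is expressed through $h_\alpha$ plus an $O(1/r)$-times-$\Omega\Gamma^\alpha G$ remainder; multiplying by $r$ turns the remainder into $L_k$ and turns $r\,h_\alpha$ into $N_k$ (since $r|h_\alpha|\le (t+r)|h_\alpha|\lesssim N_k$). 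Finally \eqref{sppde}: the combination $\nabla\Gamma^\alpha v \pm \nabla\cdot\Gamma^\alpha G\otimes\omega$ weighted by $(t\pm r)$. Add and subtract: from the first two equations of \eqref{1}, $\partial_t\Gamma^\alpha v - \nabla\cdot\Gamma^\alpha G = -\nabla\Gamma^\alpha p + f_\alpha$ and $\partial_t\Gamma^\alpha G - \nabla\Gamma^\alpha v = g_\alpha$. Contract the second with $\omega$ on the right and use \eqref{der-decomp} to replace $\partial_t$ by $\tfrac{1}{t}(S - r\partial_r)$ and $\nabla$ by its radial/rotation split; the radial derivatives $\partial_r\Gamma^\alpha v$ and $\partial_r\Gamma^\alpha G\,\omega$ are then controlled through \eqref{spconv}–\eqref{spdivg} already proven (their non-radial parts are in $L_k$), the $S\Gamma^\alpha$ terms are in $L_k$, and the nonlinear terms and pressure gradient, once multiplied by $t$, land in $N_k$. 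Combining the $+$ and $-$ versions and noting $(t\pm r)\le t + r \lesssim 2t$ in the relevant regime, or more carefully tracking the light-cone weight, yields the stated bound.

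The main obstacle I anticipate is \eqref{sppde}: getting the precise factor $(t\pm r)$ rather than just $t$ requires a careful bookkeeping of how the good derivatives combine near the light cone — specifically, recognizing that $\partial_t \mp \partial_r$ (the derivative transverse to the cone) is what the PDE controls well, while $\partial_t \pm \partial_r$ (tangential) is what needs the extra $(t\pm r)$ weight and is supplied by $S = t\partial_t + r\partial_r$ together with the constraint-based bounds \eqref{spconv}–\eqref{spdivg}. The matrix/vector tensor structure — keeping straight which index of $G$ is contracted with $\omega$ versus which is the free index, and handling the $\widetilde\Omega$ corrections $[\,(e_2\otimes e_1 - e_1\otimes e_2),G\,]$ versus $\Omega G$ — adds a layer of index manipulation but is routine. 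Everything else reduces to the single mechanism "bad derivative $=$ constraint/equation $-$ good stuff," and the weights $t$, $t+r$, $t\pm r$ are exactly calibrated so that the nonlinear terms in \eqref{ndef} absorb the leftover pieces.
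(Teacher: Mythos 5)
Your plan is correct and follows the paper's own argument essentially step for step: \eqref{spconv}--\eqref{spcurlg} are obtained by contracting the radial/angular decomposition \eqref{der-decomp} with the divergence constraints and the curl identity \eqref{1-1} (with the $h_\alpha$ term surviving only in the $\omega^\perp$ component, exactly as you note), and \eqref{sppde} is obtained by writing $t\partial_t = S - r\partial_r$ in the two evolution equations, trading $r\partial_r\Gamma^\alpha G$ for $r\nabla\cdot\Gamma^\alpha G\otimes\omega$ and $r\partial_r\Gamma^\alpha v\otimes\omega$ for $r\nabla\Gamma^\alpha v$ modulo $L_k+N_k$, then adding and subtracting the two resulting identities to produce the $(t\pm r)$ weights. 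No gaps.
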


\begin{proof}
By the decomposition  \eqref{der-decomp} we have,
\begin{equation}
\label{gradvident}
\nabla \Gamma^\alpha v = \partial_r \Gamma^\alpha v \otimes\omega+\frac1r\Omega\Gamma^\alpha v\otimes\omega^\perp.
\end{equation}
Taking the trace of this identity yields
\[
\nabla\cdot\Gamma^\alpha v
=\partial_r\Gamma^\alpha v\cdot\omega
+\frac1r\Omega\Gamma^\alpha v\cdot\omega^\perp,
\]
and so, by the divergence-free velocity constraint of \eqref{1}, we obtain \eqref{spconv}.
It also follows from \eqref{gradvident} that
\begin{equation}
\label{gradvident2}
r|\nabla \Gamma^\alpha v - \partial_r \Gamma^\alpha v \otimes\omega|\lesssim L_k,
\end{equation}
which will be used shortly in proving \eqref{sppde}.

Again by \eqref{der-decomp}, we may write for any matrix-valued function $H$
\begin{align}
\label{divid}
\partial_r H
&=\partial_r H \;I\\
\nonumber
&=\partial_r H[\omega\otimes\omega+\omega^\perp\otimes\omega^\perp]\\
\nonumber
&=\partial_r H \omega\otimes\omega+\partial_r H\omega^\perp\otimes\omega^\perp\\
\nonumber
&=[\nabla\cdot H-\frac1r\Omega H\omega^\perp]\otimes\omega
+[\nabla^\perp\cdot H+\frac1r\Omega H\omega]\otimes\omega^\perp.
\end{align}

Multiplying both sides of \eqref{divid} times the vector $\omega$,   we obtain
\[
\partial_r H\omega=\nabla\cdot H-\frac1r\Omega H\omega^\perp.
\]
Apply this to $H=\Gamma^\alpha G^\top$, and use
 the other divergence-free constraint from \eqref{1}.   We  thereby obtain
\begin{equation}
\label{gradgindent}
r|\partial_r \Gamma^\alpha G^\top\omega|
=|\Omega\Gamma^\alpha G^\top\omega^\perp|
\lesssim L_k,
\end{equation}
 which is \eqref{spcong}.

Next, apply \eqref{divid} to $H=\Gamma^\alpha G$ and use the constraint \eqref{1-1}:
\[
r\nabla\cdot\Gamma^\alpha G\otimes\omega-r\partial_r\Gamma^\alpha G
=\Omega \Gamma^\alpha G\omega^\perp\otimes\omega
-[rh_\alpha +\Omega \Gamma^\alpha G\omega]\otimes\omega^\perp.
\]
From this it follows that
\[
r|\nabla\cdot\Gamma^\alpha G-\partial_r\Gamma^\alpha G\omega|
=r|[\nabla\cdot\Gamma^\alpha G\otimes\omega-\partial_r\Gamma^\alpha G]\omega|
=|\Omega \Gamma^\alpha G\omega^\perp|
\lesssim L_k,
\]
proving \eqref{spdivg}, and also
\begin{equation}
\label{gradgindent2}
r|\nabla\cdot\Gamma^\alpha G\otimes\omega-\partial_r\Gamma^\alpha G|
\lesssim L_k+N_k.
\end{equation}
As an immediate consequence of this last inequality, we get
\[
r|\partial_r\Gamma^\alpha G\omega^\perp|
=r|[\nabla\cdot\Gamma^\alpha G\otimes\omega-\partial_r\Gamma^\alpha G]\omega^\perp|
\lesssim L_k+N_k,
\]
which proves \eqref{spcurlg}.
We are now ready to prove \eqref{sppde}.

Using   the PDEs \eqref{1} and the definition $S=t\partial_t+r\partial_r$, we can write
\begin{align*}
&t\nabla \Gamma^\alpha v+r\partial_r\Gamma^\alpha G=S\Gamma^\alpha G-tg_\alpha\\
&t\nabla\cdot\Gamma^\alpha G+r\partial_r\Gamma^\alpha v=S\Gamma^\alpha v -tf_\alpha+t\nabla\Gamma^\alpha p.
\end{align*}
This is rearranged as follows:
\begin{align*}
&t\nabla \Gamma^\alpha v+r\nabla\cdot\Gamma^\alpha G\otimes\omega
=r[\nabla\cdot\Gamma^\alpha G\otimes\omega-\partial_r\Gamma^\alpha G]+S\Gamma^\alpha G-tg_\alpha\\
&t\nabla\cdot\Gamma^\alpha G\otimes\omega+r\nabla\Gamma^\alpha v
=r[\nabla\Gamma^\alpha v-\partial_r\Gamma^\alpha v\otimes\omega]
+[S\Gamma^\alpha v -tf_\alpha+t\nabla\Gamma^\alpha p]\otimes\omega.
\end{align*}
Notice that by \eqref{gradvident2} and \eqref{gradgindent2}, the right-hand sides of these identities are  bounded by $L_k+N_k$.
Therefore, the bounds \eqref{sppde} result from the combination of these two identities.
\end{proof}

\begin{lem}
\label{divelltwo}
Let $(v,G)\in H^k_\Lambda(T)$.  Then for $|\alpha|\le k-2$, we have
\[
\|r\Gamma^\alpha v\cdot\omega\|_{L^\infty}+\|r\Gamma^\alpha G^\top\omega\|_{L^\infty}\lesssim E_{|\alpha|+2}^{1/2}.
\]
\end{lem}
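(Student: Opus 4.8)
The plan is to apply the $\lambda=2$ case of the weighted Sobolev inequality \eqref{gsob0} to the ``good'' scalar quantity $\Gamma^\alpha v\cdot\omega$ and to the ``good'' vector quantity $\Gamma^\alpha G^\top\omega$, and then to bound the resulting right-hand side by splitting it into undifferentiated terms, controlled by the energy, and $r\partial_r$-terms, controlled by the pointwise divergence-structure bounds of Lemma \ref{spellinfty}. The reason this works for exactly these contractions is that, although the full gradients of $\Gamma^\alpha v$ and $\Gamma^\alpha G$ are not controlled in $L^2$ against a weight $r$, their radial components are, precisely by \eqref{spconv} and \eqref{spcong}.

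First I would record two algebraic facts. Since $\omega=x/r$ is constant along rays, $\partial_r\omega=0$, so $\partial_r(\Gamma^\alpha v\cdot\omega)=(\partial_r\Gamma^\alpha v)\cdot\omega$ and $\partial_r(\Gamma^\alpha G^\top\omega)=(\partial_r\Gamma^\alpha G)^\top\omega$; and because $\widetilde\Omega$ is built to be compatible with contraction against $\omega$ (using $\Omega\omega=\omega^\perp$ and the antisymmetry of $J=e_2\otimes e_1-e_1\otimes e_2$), one checks directly that $\Omega(\Gamma^\alpha v\cdot\omega)=(\widetilde\Omega\Gamma^\alpha v)\cdot\omega$ and $\widetilde\Omega(\Gamma^\alpha G^\top\omega)=(\widetilde\Omega\Gamma^\alpha G)^\top\omega$. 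Iterating, for $a=0,1$ the quantities $\widetilde\Omega^a(\Gamma^\alpha v\cdot\omega)$ and $\widetilde\Omega^a(\Gamma^\alpha G^\top\omega)$ are again of the form $\Gamma^\beta v\cdot\omega$, respectively $\Gamma^\beta G^\top\omega$, with $|\beta|=|\alpha|+a\le|\alpha|+1$. Applying \eqref{gsob0} with $\lambda=2$ --- in its vector-valued form (Remark after Lemma \ref{gsob}) for $\Gamma^\alpha G^\top\omega$ --- then bounds $r^2\big(|\Gamma^\alpha v\cdot\omega|^2+|\Gamma^\alpha G^\top\omega|^2\big)$ by a sum, over $a\in\{0,1\}$ and $|\beta|\le|\alpha|+1$, of the squared $L^2$-norms of $\Gamma^\beta v$, $\Gamma^\beta G$, $r\,\partial_r\Gamma^\beta v\cdot\omega$ and $r\,\partial_r\Gamma^\beta G^\top\omega$. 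The first two types of terms are $\le E_{|\alpha|+1}(t)$ by \eqref{Energy}. For the last two, since $|\beta|\le|\alpha|+1=(|\alpha|+2)-1$, the estimates \eqref{spconv}, \eqref{spcong} give the pointwise bound $r\,|\partial_r\Gamma^\beta v\cdot\omega|+r\,|\partial_r\Gamma^\beta G^\top\omega|\lesssim L_{|\alpha|+2}$, and $\|L_{|\alpha|+2}\|_{L^2}\lesssim E_{|\alpha|+2}(t)^{1/2}$ by Cauchy--Schwarz over the finitely many multi-indices in the definition of $L_{|\alpha|+2}$. Combining, $r^2\big(|\Gamma^\alpha v\cdot\omega|^2+|\Gamma^\alpha G^\top\omega|^2\big)\lesssim E_{|\alpha|+2}(t)$, and taking the supremum in $x$ finishes the proof.

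The one point requiring care --- which I expect to be the only real subtlety --- is the applicability of \eqref{gsob0}: the function $\Gamma^\alpha v\cdot\omega$ need not lie in $H^1(\rr^2)$, since its full gradient carries an angular term $\sim r^{-1}\Omega(\Gamma^\alpha v\cdot\omega)$ near the origin. But the right-hand side of \eqref{gsob0} involves only $\partial_r$- and $\Omega$-derivatives of $f$, which by the identities above contain no such singularity and are finite here; so the inequality still applies, since its proof (Lemma \ref{rgsob}, integrating from $r$ to $\infty$) uses only finiteness of that right-hand side together with decay at infinity. Everything else is a matter of bookkeeping the derivative counts so that the loss lands exactly at level $|\alpha|+2$.
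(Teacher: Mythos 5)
Your proof is correct and follows essentially the same route as the paper: the commutation identities $\Omega(\Gamma^\alpha v\cdot\omega)=(\widetilde\Omega\Gamma^\alpha v)\cdot\omega$, $\widetilde\Omega(\Gamma^\alpha G^\top\omega)=(\widetilde\Omega\Gamma^\alpha G)^\top\omega$ (these are exactly \eqref{omprop}), then \eqref{gsob0} with $\lambda=2$, then \eqref{spconv} and \eqref{spcong} applied at order $|\alpha|+1\le k-1$. Your closing remark about why \eqref{gsob0} still applies even though $\Gamma^\alpha v\cdot\omega$ need not lie in $H^2(\rr^2)$ is a careful observation that the paper leaves implicit.
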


\begin{proof}
We shall make use of the fact that
\begin{equation}
\label{omprop}
\Omega(v(x)\cdot\omega)=(\widetilde\Omega v(x))\cdot \omega,\quad
\widetilde\Omega(G(x)^\top\omega)=(\widetilde\Omega G(x))^\top \omega,\quad
\omega=x/r.
\end{equation}

By \eqref{gsob0} with $\lambda=2$ and \eqref{omprop}, we have
\begin{multline*}
\|r\Gamma^\alpha v\cdot\omega\|_{L^\infty}+\|r\Gamma^\alpha G^\top\omega\|_{L^\infty}\\
\lesssim \sum_{a=0,1}[\|r\partial_r\Omega^a(\Gamma^\alpha
v\cdot\omega)\|_{L^2}
+\|r\partial_r\widetilde\Omega^a( \Gamma^\alpha G^\top\omega)\|_{L^2}\\
+\|\Omega^a(\Gamma^\alpha v\cdot\omega)\|_{L^2}
+\|\widetilde\Omega^a(\Gamma^\alpha  G^\top\omega)\|_{L^2}]\\
=\sum_{a=0,1}[\|r(\partial_r\widetilde\Omega^a\Gamma^\alpha
v)\cdot\omega\|_{L^2}
+\|r\partial_r(\widetilde\Omega^a \Gamma^\alpha G^\top)\omega\|_{L^2}\\
+\|(\widetilde\Omega^a\Gamma^\alpha v)\cdot\omega\|_{L^2}
+\|(\widetilde\Omega^a\Gamma^\alpha  G^\top)\omega\|_{L^2}].\\
\end{multline*}
The result now follows by \eqref{spconv}, \eqref{spcong}.
\end{proof}

\section{Weighted $L^2$ Estimate}
\label{weight}

In this section, we show that the weighted norm is controlled by the energy, for small solutions.

\begin{lem}
\label{nonlinest}
Suppose that $(v,F)=(v,I+G)$,
$(v, G) \in H^k_\Lambda(T)$,  $k\ge4$, solves \eqref{Elas}, \eqref{derconstr}.
Then
\[
\|N_k(t)\|_{L^2}\lesssim E_k(t)+E_k(t)^{1/2}X_k(t)^{1/2}.
\]
\end{lem}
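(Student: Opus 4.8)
The plan is to estimate each of the four groups of terms appearing in $N_k$ — namely $t|f_\alpha|$, $t|g_\alpha|$, $(t+r)|h_\alpha|$, and $t|\nabla\Gamma^\alpha p|$ — in $L^2(\rr^2)$ for $|\alpha|\le k-1$, using throughout the Klainerman-type product estimate philosophy: in each bilinear product $\Gamma^\beta(\cdot)\cdot\Gamma^\gamma(\cdot)$ with $\beta+\gamma=\alpha$, put the factor with fewer derivatives (say $|\beta|\le|\gamma|$, so $|\beta|\le (k-1)/2\le k-2$) in $L^\infty$ and the factor with more derivatives in $L^2$. The key structural point is that every term in $f_\alpha$, $g_\alpha$, $h_\alpha$ (and, via Lemma~\ref{lem4-1}, in $\nabla\Gamma^\alpha p$) contains exactly one spatial derivative $\nabla$ acting on one of the two factors, so the $L^2$ part always carries a gradient and can be absorbed into $X_k$ through the weight $\jbtr^{-1}$, while the $t$-prefactor is converted to decay using the pointwise bounds of Section~\ref{special1} and Section~\ref{special2}.

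First I would handle $t|f_\alpha|$ and $t|g_\alpha|$, which are the transport-type and $\nabla v\,G$-type products. Write a typical term, e.g., $\Gamma^\beta v\cdot\nabla\Gamma^\gamma v$ with $|\beta|\le|\gamma|$. The idea is to split $\nabla\Gamma^\gamma v$ using the null/decomposition structure $\nabla=(x/r)\partial_r+(x^\perp/r^2)\Omega$: the good component $\tfrac1r\Omega\Gamma^\gamma v$ already gains a factor $r^{-1}\gtrsim t^{-1}$ on the relevant region together with an extra $\Gamma$, and the bad component $\partial_r\Gamma^\gamma v$ is paired against $\Gamma^\beta v\cdot\omega$, which by \eqref{spconv} of Lemma~\ref{spellinfty} (after putting $\Gamma^\beta v$ in $L^\infty$ via Lemma~\ref{divelltwo}, losing two derivatives, permissible since $k\ge4$) is $O(r^{-1}L_k)\lesssim t^{-1}E_k^{1/2}$. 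In each case one factor of $t$ is eaten; what remains is a gradient in $L^2$ times an $L^\infty$ quantity controlled by $E_k^{1/2}$, giving the $E_k$ or $E_k^{1/2}X_k^{1/2}$ contribution once we split according to which factor carries the gradient and insert $\jbtr\cdot\jbtr^{-1}$. The term $g_\alpha$ is treated identically, using \eqref{spdivg}, \eqref{spcong} in place of \eqref{spconv}; the divergence-structure term $\nabla\cdot(\Gamma^\gamma G\,\Gamma^\beta G^\top)$ in $f_\alpha$ is the same after noting $\nabla\cdot G^\top=0$ lets one always move the derivative onto the high-derivative factor or exploit the constraint.

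For $(t+r)|h_\alpha|$ I would use \eqref{2-7}, which is a sum of terms $\Gamma^\beta G_{m1}\partial_m\Gamma^\gamma G_{i2}$; again decompose $\partial_m$ radially and use \eqref{spcong}/\eqref{spcurlg} together with Lemma~\ref{divelltwo} to extract the $(t+r)^{-1}$ (really $r^{-1}$) decay from the low-derivative factor $\Gamma^\beta G$; note that $\eqref{spcurlg}$ itself contains $N_k$, so I must be careful to use it only for the low-order factor where $|\beta|\le k-2$ and there $N_{|\beta|+1}$ is already controlled by the inductive/bootstrapping bound (or, cleaner, the $N_k$ on the right of \eqref{spcurlg} at low order is bounded by $E^{1/2}+E^{1/2}X^{1/2}$, which is an absorbable lower-order quantity). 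Finally $t|\nabla\Gamma^\alpha p|$: by the refined pressure bound \eqref{pressure-2} of Lemma~\ref{lem4-1}, $\nabla\Gamma^\alpha p$ is already dominated in $L^2$ by $\sum_{|\beta|\le|\gamma|}\|\partial_j\Gamma^\beta v_i\,\Gamma^\gamma v_j-\partial_j\Gamma^\beta G_{ik}\,\Gamma^\gamma G_{jk}\|_{L^2}$, where the derivative sits on the \emph{low}-order factor; so here I put $\Gamma^\gamma$ (the high-order, undifferentiated factor) in $L^2$ — but that is an energy, not an $X_k$ quantity — and $\partial_j\Gamma^\beta$ with $|\beta|+1\le (k+1)/2\le k-1$ in $L^\infty$, using Lemma~\ref{standard-sob} / the $t^{-1}$ decay of $\nabla\Gamma^\beta$-type quantities on $r\le\jbtt$ and the weighted Sobolev Lemma~\ref{gsob} on $r\ge\jbtt$ (where $\jbtr^{-1}$ helps), to produce the factor $t^{-1}$. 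The main obstacle, and the place to be most careful, is exactly this pressure term: one must verify that the $t$-weight can be absorbed given that \eqref{pressure-2} forces the \emph{differentiated} factor to be the low-order one — so the $L^\infty$ bound needed is a decay estimate for $\nabla\Gamma^\beta(v,G)$, which is available only through the weighted estimates of Sections~\ref{ellinfty}–\ref{special1} and only for gradients, precisely the point the paper flags. Assembling the four pieces and summing over $\beta+\gamma=\alpha$, $|\alpha|\le k-1$, then yields $\|N_k(t)\|_{L^2}\lesssim E_k(t)+E_k(t)^{1/2}X_k(t)^{1/2}$.
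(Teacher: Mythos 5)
Your overall architecture matches the paper's: split into $r\le\jbtt$ (where $\jbt\lesssim\jbtr$ lets the $t$-weight be absorbed into the $X_k$-norm of the gradient factor) and $r\ge\jbtt$ (where the decomposition \eqref{der-decomp} plus the null pairing of $\partial_r\Gamma^\gamma$ against the components $\Gamma^\beta v\cdot\omega$, $\Gamma^\beta G^\top\omega$ supplies the needed $r^{-1}\lesssim t^{-1}$ decay). Two of your detours, however, either create a problem or leave a term uncovered. First, for $h_\alpha$ you invoke \eqref{spcurlg}, whose right-hand side contains $N_k$ --- the very quantity being estimated --- and your proposed repair (an ``inductive/bootstrapping bound'') is neither carried out nor obviously non-circular, since $N_k$ sums over all $|\alpha|\le k-1$. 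The detour is unnecessary: contracting the index $m$ in \eqref{2-7} with $\omega_m\partial_r$ produces $(\Gamma^\beta G^\top\omega)_1\,\partial_r\Gamma^\gamma G_{i2}$, etc., so the good component sits on the \emph{undifferentiated} factor and Lemma \ref{divelltwo} (which rests only on \eqref{spcong}, free of $N_k$) suffices. Second, and more seriously, the blanket recipe ``low-derivative factor in $L^\infty$, high-derivative factor in $L^2$'' fails in the outer region for the term $\beta=\alpha$, $\gamma=0$ with $|\alpha|=k-1$: there the good component $\Gamma^\alpha v\cdot\omega$ carries $k-1$ derivatives, so $\|r\Gamma^\alpha v\cdot\omega\|_{L^\infty}$ would cost $E_{k+1}$, while putting $\partial_r v$ in $L^\infty$ and $\Gamma^\alpha v\cdot\omega$ in plain $L^2$ yields no decay at all. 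The paper handles exactly this term with a mixed H\"older estimate,
$\sup_{r\ge\jbtt}\bigl(r^2\int_{\mathbb{S}^1}|\Gamma^\alpha v\cdot\omega|^2d\sigma\bigr)^{1/2}\cdot\bigl(\int\sup_{\mathbb{S}^1}|\partial_r v|^2\,r\,dr\bigr)^{1/2}$,
controlling the first factor by \eqref{rgsob0} with $\lambda=2$ together with \eqref{spconv}--\eqref{spcong}, and the second by Lemma \ref{lem3-4}. Some such device is needed; your proposal has no substitute for it.

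On the pressure: you route it through the refined bound \eqref{pressure-2} and identify it as ``the main obstacle,'' but the paper simply uses \eqref{pressure-1}, $\|\nabla\Gamma^\alpha p\|_{L^2}\lesssim\|f_\alpha\|_{L^2}$, valid for all $|\alpha|\le k-1$, which reduces the pressure contribution to the already-estimated $f_\alpha$ in one line. (The refined estimate \eqref{pressure-2} is reserved for the top-order ghost-energy argument of Section \ref{ghostenergy}, where $|\alpha|=k$ and one derivative must be saved.) Your route is probably salvageable, but it reintroduces the same endpoint difficulty: with the derivative forced onto the low-order factor, the good component $\Gamma^\gamma v\cdot\omega$ lands on the high-order factor with $|\gamma|$ possibly equal to $k-1$, again requiring the mixed-norm trick rather than Lemma \ref{divelltwo}.
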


\begin{proof}

By \eqref{ndef} and Lemma \ref{lem4-1}, we have that
\begin{align*}
\|N_k(t)\|_{L^2}&\le\sum_{\tiny\begin{matrix}|\alpha|\le
k-1\end{matrix}}
[t\|\Gamma^\alpha p(t)\|_{L^2}+t\|f_\alpha(t)\|_{L^2}+t\|g_\alpha(t)\|_{L^2}+\|(t+r)h_\alpha(t)\|_{L^2}]\\
&\lesssim\sum_{\tiny\begin{matrix}|\alpha|\le
k-1\end{matrix}}[t\|f_\alpha(t)\|_{L^2}+t\|g_\alpha(t)\|_{L^2}+\|(t+r)h_\alpha(t)\|_{L^2}]
\end{align*}
In estimating these terms, we shall consider two regions: $r\le\jbtt$ and $r\ge \jbtt$.

\subsection*{ Estimates of nonlinearities for $ r\le\jbtt$.}

 Examining definitions \eqref{1}, \eqref{2-7}, we find that
\begin{multline*}
\sum_{\tiny\begin{matrix}|\alpha|\le k-1\end{matrix}}
[t\|f_\alpha\|_{L^2(r\le\jbtt)}+t\|g_\alpha\|_{L^2(r\le\jbtt)}+\|(t+r)h_\alpha\|_{L^2(r\le\jbtt)}]\\
\lesssim\sum_{\tiny\begin{matrix}\beta+\gamma=\alpha\\|\alpha|\le
k-1\end{matrix}} \jbt\|(|\Gamma^\beta v|+|\Gamma^\beta G|)
(|\nabla\Gamma^\gamma v|+|\nabla\Gamma^\gamma
G|)\|_{L^2(r\le\jbtt)}.
\end{multline*}
To simplify the notation a bit, we shall write
\[
|(\Gamma^kv,\Gamma^kG)|=\sum_{|\alpha|\le k}[|\Gamma^\alpha v|+|\Gamma^\alpha G|],
\]
(and similar).
We make use of the fact that since  $\beta+\gamma=\alpha$,  $|\alpha|\le k-1$, and $k\ge4$,
 either $|\beta|+2\le k$ or $|\gamma|+3\le k$.
Thus, we have by \eqref{gsob1}
\begin{align*}
\jbt\|(|\Gamma^\beta v|&+|\Gamma^\beta G|)
(|\nabla\Gamma^\gamma v|+|\nabla\Gamma^\gamma G|)\|_{L^2(r\le\jbtt)}\\
\lesssim &\jbt
\|(\Gamma^{k-2} v,\Gamma^{k-2}G)\|_{L^\infty}
\|(\nabla\Gamma^{k-1} v,\nabla\Gamma^{k-1} G)\|_{L^2(r\le\jbtt)}\\
&+\jbt
\|(\Gamma^{k} v,\Gamma^{k}G)\|_{L^2}
\|(\nabla\Gamma^{k-3} v,\nabla\Gamma^{k-3} G)\|_{L^\infty(r\le\jbtt)}\\
\lesssim &
\|(\Gamma^{k-2} v,\Gamma^{k-2}G)\|_{L^\infty}
\|\jbtr(\nabla\Gamma^{k-1} v,\nabla\Gamma^{k-1} G)\|_{L^2(r\le\jbtt)}\\
&+
\|(\Gamma^{k} v,\Gamma^{k}G)\|_{L^2}
\sum_{|\lambda|\le2}
\|\jbtr\partial^\lambda(\nabla\Gamma^{k-3} v,\nabla\Gamma^{k-3} G)\|_{L^\infty(r\le\jbtt)}\\
\lesssim &E_k(t)^{1/2}X_k(t)^{1/2}.
\end{align*}
\subsection*{ Estimates of nonlinearities for $ r\ge\jbtt$.}
Using \eqref{der-decomp}, we replace all  derivatives which occur in \eqref{2-5}, \eqref{2-7}
by their radial and angular parts.
 We find that
\begin{multline*}
|f_\alpha|+|g_\alpha|+|h_\alpha| \lesssim
\sum_{\tiny\begin{matrix}\beta+\gamma=\alpha\\|\alpha|\le
k-1\end{matrix}}
(|\Gamma^\beta v\cdot\omega|+|\Gamma^\beta G^\top\omega|)(|\partial_r\Gamma^\gamma v|+|\partial_r\Gamma^\gamma G|)\\
+\sum_{\tiny\begin{matrix}\beta+\gamma=\alpha\\|\alpha|\le
k-1\end{matrix}} (|\Gamma^\beta v|+|\Gamma^\beta
G|)\frac1r(|\Omega\Gamma^\gamma v|+|\Omega\Gamma^\gamma G|).
\end{multline*}
Thus, we obtain
\begin{multline}\label{mess}
\sum_{|\alpha|\le k-1}[t\|f_\alpha\|_{L^2(r\ge\jbtt)}+t\|g_\alpha\|_{L^2(r\ge\jbtt)}+\|(t+r)h_\alpha\|_{L^2(r\ge\jbtt)}]\\
\lesssim \sum_{\tiny\begin{matrix}\beta+\gamma=\alpha\\|\alpha|\le
k-1\end{matrix}}
\|r(|\Gamma^\beta v\cdot\omega|+|\Gamma^\beta G^\top\omega|)(|\partial_r\Gamma^\gamma v|+|\partial_r\Gamma^\gamma G|)\|_{L^2(r\ge\jbtt)}\\
+\sum_{\tiny\begin{matrix}\beta+\gamma=\alpha\\|\alpha|\le
k-1\end{matrix}} \|(|\Gamma^\beta v|+|\Gamma^\beta
G|)(|\Omega\Gamma^\gamma v|+|\Omega\Gamma^\gamma
G|)\|_{L^2(r\ge\jbtt)}.
\end{multline}
We claim that all terms on the right-hand side of \eqref{mess} can be bounded by $E_k$.

  Consider  the term in the first sum which has
$\beta=\alpha$, $\gamma=0$.  It can be estimated as follows:
\begin{align}
\nonumber
\|r(|\Gamma^\alpha v\cdot\omega|+&|\Gamma^\alpha G^\top\omega|)(|\partial_r   v|+|\partial_r G|)\|_{L^2(r\ge\jbtt)}\\
\label{step}
&\lesssim  \Bigg(\sup_{r\ge\jbtt}r^2\int_{\mathbb S^1}
(|\Gamma^\alpha v(r\omega)\cdot\omega|^2+|\Gamma^\alpha G^\top(r\omega)\omega|^2)d\sigma\Bigg)^{1/2}\\
\nonumber
&\qquad\qquad\times \Bigg(\int_{\jbtt}^\infty \sup_{\mathbb S^1}(|\partial_r  v(r\omega)|^2+|\partial_r  G(r\omega)|^2)rdr\Bigg)^{1/2}.
\end{align}
Using  \eqref{rgsob0} with $\lambda=2$ from Lemma \ref{rgsob},
the first of these integrals is bounded by
\[
\|r\partial_r\Gamma^\alpha
v\cdot\omega\|_{L^2}+\|r\partial_r\Gamma^\alpha G^\top\omega\|_{L^2}
+\|\Gamma^\alpha v\|_{L^2}+\|\Gamma^\alpha G\|_{L^2}.
\]
Noticing the form of the first two terms, we can use \eqref{spconv},\eqref{spcong} to bound this by $E_{|\alpha|+1}^{1/2}\le E_k^{1/2}$.

The second integral in \eqref{step} is bounded as follows using Lemma
\ref{lem3-4}:
\[
\sum_{a=0,1}(\|\Omega^a\partial_r   v\|_{L^2}+\|\Omega^a\partial_r  G\|_{L^2})\lesssim E_2^{1/2}.
\]
This proves that the term in \eqref{step} is bounded by $E_k$, as claimed.

For the remaining terms in the first sum  in \eqref{mess}, we have  $\beta\ne\alpha$.  We estimate these as follows:
\begin{multline}
\label{fragment}
\|r(|\Gamma^\beta v\cdot\omega|+
|\Gamma^\beta G^\top\omega|)(|\partial_r\Gamma^\gamma v|+|\partial_r\Gamma^\gamma G|)\|_{L^2(r\ge\jbtt)}\\
\lesssim
(\|r\Gamma^\beta v\cdot\omega\|_{L^\infty}+\|r\Gamma^\beta G^\top\omega\|_{L^\infty})
(\|\partial_r\Gamma^\gamma v\|_{L^2}+\|\partial_r\Gamma^\gamma G\|_{L^2}).
\end{multline}
The terms in $L^\infty$ are estimated
by $E_{|\beta|+2}^{1/2}$, using Lemma \ref{divelltwo}, and
thus, since $|\beta|\le|\alpha|-1\le k-2$, the expression \eqref{fragment} is bounded by
\[
E_{|\beta|+2}^{1/2}E_{|\gamma|+1}^{1/2}
\lesssim E_k.
\]
Altogether, the first sum of terms in \eqref{mess} is bounded by $E_k$.

The second group of terms on the right of \eqref{mess}
is also bounded by $E_k$, using the same interpolation strategy as in the case $r\le\jbtt$.

\end{proof}

\begin{lem}
\label{div2grad}
Suppose that
$(v, G) \in H^k_\Lambda(T)$,  solves \eqref{Elas}, \eqref{derconstr}.
\[
\|(t-r)\nabla\Gamma^\alpha G\|_{L^2}\lesssim E_k(t)^{1/2}+ \|(t-r)\nabla\cdot\Gamma^\alpha G\|_{L^2}
+\|(t+r)h_\alpha\|_{L^2}.
\]
\end{lem}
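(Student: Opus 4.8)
The strategy is to commute the weight $(t-r)$ inside the derivative and then to reconstruct the full gradient of the resulting field from its divergence and its curl by means of the elementary two-dimensional identity
\[
\|\nabla w\|_{L^2}^2=\|\nabla\cdot w\|_{L^2}^2+\|\nabla^\perp\cdot w\|_{L^2}^2 ,
\]
valid for $w\in C_0^\infty(\rr^2;\rr^2)$ by two integrations by parts (the cross terms cancel), and applied row by row when $w$ is matrix-valued. The point of the constraint is that, by \eqref{1-1}, the row-wise curl $\nabla^\perp\cdot\Gamma^\alpha G$ is exactly the nonlinearity $h_\alpha$; thus the identity converts control of $\nabla\cdot\Gamma^\alpha G$ and of $h_\alpha$ into control of all of $\nabla\Gamma^\alpha G$.

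Concretely, since $\nabla(t-r)=-\omega$ and $\nabla^\perp(t-r)=-\omega^\perp$ almost everywhere, the Leibniz rule gives the three identities
\[
(t-r)\nabla\Gamma^\alpha G=\nabla\big((t-r)\Gamma^\alpha G\big)+\Gamma^\alpha G\otimes\omega ,
\]
\[
\nabla\cdot\big((t-r)\Gamma^\alpha G\big)=(t-r)\,\nabla\cdot\Gamma^\alpha G-\Gamma^\alpha G\,\omega ,
\qquad
\nabla^\perp\cdot\big((t-r)\Gamma^\alpha G\big)=(t-r)\,h_\alpha-\Gamma^\alpha G\,\omega^\perp ,
\]
the last one using \eqref{1-1}. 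The first identity already yields $\|(t-r)\nabla\Gamma^\alpha G\|_{L^2}\le\|\nabla((t-r)\Gamma^\alpha G)\|_{L^2}+\|\Gamma^\alpha G\|_{L^2}$. Applying the div--curl identity to $w=(t-r)\Gamma^\alpha G$, substituting the last two Leibniz formulas, and using $|\Gamma^\alpha G\,\omega|,\,|\Gamma^\alpha G\,\omega^\perp|\le|\Gamma^\alpha G|$, the triangle inequality, and $|t-r|\le t+r$, I obtain
\[
\|\nabla((t-r)\Gamma^\alpha G)\|_{L^2}\lesssim\|(t-r)\,\nabla\cdot\Gamma^\alpha G\|_{L^2}+\|(t+r)h_\alpha\|_{L^2}+\|\Gamma^\alpha G\|_{L^2} .
\]
Combining the two displays and bounding $\|\Gamma^\alpha G\|_{L^2}\le E_k(t)^{1/2}$ finishes the proof.

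The only point that requires care is that the field $(t-r)\Gamma^\alpha G$ grows linearly in $|x|$ and so a priori need not lie in $L^2$, whereas the div--curl identity was stated for $C_0^\infty$ fields. This is handled by a routine cutoff-and-limit argument: one may assume the solution smooth and the right-hand side of the lemma finite (otherwise there is nothing to prove); multiply $(t-r)\Gamma^\alpha G$ by $\psi(x/R)$, with $\psi\in C_0^\infty$, $\psi\equiv1$ near the origin and $\mathrm{supp}\,\psi\subset\{|x|\le2\}$; apply the identity to this compactly supported field; and let $R\to\infty$. The terms produced by $\nabla\psi(x/R)$ are of size $O(\|\Gamma^\alpha G\|_{L^2(R\le|x|\le2R)})$ and vanish in the limit, and the remaining terms converge by monotone convergence. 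All other steps are pointwise algebra and Minkowski's inequality, so I do not anticipate any further difficulty.
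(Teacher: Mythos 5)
Your proof is correct and rests on the same key fact as the paper's: the two--dimensional $L^2$ div--curl identity combined with the constraint \eqref{1-1}, which identifies $\nabla^\perp\cdot\Gamma^\alpha G$ with $h_\alpha$. The only difference is bookkeeping --- the paper keeps the weight outside, writes the pointwise deficit $|\nabla H|^2-|\nabla\cdot H|^2-|\nabla^\perp\cdot H|^2$ as an exact divergence, multiplies by $(t-r)^2$, integrates by parts once and absorbs the resulting term by Young's inequality, whereas you commute the weight into the field, apply the clean identity to the (cut off) weighted field, and control the commutator terms by $\|\Gamma^\alpha G\|_{L^2}$; both implementations are sound.
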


\begin{proof}
For any $\rr^2\otimes\rr^2$-valued function $H$, we have that
\[
|\nabla H|^2-[|\nabla\cdot H|^2+|\nabla^\perp\cdot H|^2]=
-2[\partial_1(H_{i1}\partial_2H_{i2})-\partial_2(H_{i1}\partial_1H_{i2})].
\]
Thus, using integration by parts and  Young's inequality, we get
\begin{align*}
\|(t-r)\nabla H\|_{L^2}^2&-\left[\|(t-r)\nabla\cdot  H\|_{L^2}^2+\|(t-r)\nabla^\perp\cdot H\|_{L^2}^2\right]\\
&=-\int 2[\partial_1(H_{i1}\partial_2H_{i2})-\partial_2(H_{i1}\partial_1H_{i2})]dx\\
&=-\int4(t-r)[\omega_1(H_{i1}\partial_2H_{i2})-\omega_2(H_{i1}\partial_1H_{i2})]dx\\
&\le\frac12\|(t-r)\nabla H\|_{L^2}^2+C\|H\|_{L^2}^2,
\end{align*}
and so we obtain
\[
\|(t-r)\nabla H\|_{L^2}^2\lesssim \|(t-r)\nabla\cdot  H\|_{L^2}^2+\|(t-r)\nabla^\perp\cdot H\|_{L^2}^2 + \|H\|_{L^2}^2.
\]
The Lemma follows by applying this inequality to $H=\Gamma^\alpha G$ and then using \eqref{1-1}.

\end{proof}

\begin{thm}
\label{weightedelltwo}
Suppose that $(v,F)=(v,I+G)$,
$(v, G) \in H^k_\Lambda(T)$,  $k\ge4$, solves \eqref{Elas}, \eqref{derconstr}.
If $E_k(t)\ll1$, then $X_k(t)\lesssim E_k(t)^{1/2}$.
\end{thm}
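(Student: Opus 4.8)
The plan is to bootstrap the weighted norm $X_k$ against itself using the special-quantity estimates of Section~\ref{special1}, the pressure bound of Section~\ref{pressure}, and the div-to-grad inequality of Lemma~\ref{div2grad}. The key structural observation is that $X_k$ is built from $\|\jbtr\nabla\Gamma^\alpha v\|_{L^2}$ and $\|\jbtr\nabla\Gamma^\alpha G\|_{L^2}$ for $|\alpha|\le k-1$, and the special quantities in Lemma~\ref{spellinfty}—in particular \eqref{sppde}—express the combination $\nabla\Gamma^\alpha v\pm\nabla\cdot\Gamma^\alpha G\otimes\omega$, weighted by $(t\pm r)$, in terms of $L_k$ and $N_k$ pointwise. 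The strategy is to peel off the good components (those controlled pointwise by $L_k$, hence in $L^2$ by $E_k^{1/2}$ after the $r^{-1}$ weight is absorbed) and reduce everything to controlling $\|(t-r)\nabla\cdot\Gamma^\alpha G\|_{L^2}$, which in turn is handled by the incompressibility constraint $\nabla\cdot G^\top=0$ together with Lemma~\ref{div2grad}.

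First I would treat the region $r\ge\jbtt$ and the region $r\le\jbtt$ separately, since $\jbtr\sim\jbt$ only on the latter while on the former $\jbtr$ can be much smaller than $r$. In the region $r\le\jbtt$, one has $\jbt\lesssim\jbtr$, so it suffices to bound $\jbt\|\nabla\Gamma^\alpha v\|_{L^2(r\le\jbtt)}$ and the analogous $G$-term; here I would use \eqref{sppde} with, say, the minus sign, writing $\nabla\Gamma^\alpha v=[\nabla\Gamma^\alpha v-\nabla\cdot\Gamma^\alpha G\otimes\omega]+\nabla\cdot\Gamma^\alpha G\otimes\omega$, bounding the bracket via $(L_k+N_k)/(t-r)$—no wait, via \eqref{sppde} the weight is $(t-r)$ on the left, so on $r\le\jbtt$ one gets $\jbt|\nabla\Gamma^\alpha v-\nabla\cdot\Gamma^\alpha G\otimes\omega|\lesssim L_k+N_k$, and then $\|L_k\|_{L^2}\lesssim E_k^{1/2}$ while $\|N_k\|_{L^2}\lesssim E_k+E_k^{1/2}X_k^{1/2}$ by Lemma~\ref{nonlinest}. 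This leaves $\jbt\|\nabla\cdot\Gamma^\alpha G\|_{L^2(r\le\jbtt)}$. In the region $r\ge\jbtt$, I would instead use \eqref{spconv}, \eqref{spcong}, \eqref{spdivg}, \eqref{spcurlg} and the decomposition \eqref{der-decomp}: the radial parts of $\nabla\Gamma^\alpha v$ and $\nabla\Gamma^\alpha G$ are governed by $r^{-1}(L_k+N_k)$, and on $r\ge\jbtt$ the angular parts $r^{-1}\Omega\Gamma^\alpha(\cdot)$ carry an extra factor $r^{-1}\le\jbtt^{-1}$ which, combined with the weight $\jbtr\lesssim r$, is harmless.

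The crux is therefore bounding $\|(t-r)\nabla\cdot\Gamma^\alpha G\|_{L^2}$, and this is where Lemma~\ref{div2grad} together with the constraints does the work: writing $\nabla\cdot\Gamma^\alpha G$ in terms of $\nabla\cdot(\Gamma^\alpha G)$ versus $\nabla\cdot(\Gamma^\alpha G)^\top$—the latter vanishes by \eqref{1}—one reduces $\|(t-r)\nabla\Gamma^\alpha G\|_{L^2}$ to $E_k^{1/2}+\|(t-r)\nabla\cdot\Gamma^\alpha G\|_{L^2}+\|(t+r)h_\alpha\|_{L^2}$ via Lemma~\ref{div2grad}; but $\|(t+r)h_\alpha\|_{L^2}\le\|N_k\|_{L^2}$, and I suspect the decisive point is to note that \eqref{sppde} with the \emph{plus} sign controls $(t+r)|\nabla\Gamma^\alpha v+\nabla\cdot\Gamma^\alpha G\otimes\omega|$, so subtracting the two signed versions isolates $(t+r)$ times a good quantity, while adding them isolates $r$ times $[\nabla\cdot\Gamma^\alpha G\otimes\omega]$—hmm, more carefully, combining with the $r$-weighted curl/div special estimates \eqref{spdivg}, \eqref{spcurlg} closes the loop because the only genuinely $(t-r)$-weighted object left, $\nabla\cdot\Gamma^\alpha G$, appears on both sides.

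Putting the pieces together yields an estimate of the shape
\[
X_k(t)\lesssim E_k(t)^{1/2}+\|N_k(t)\|_{L^2}\lesssim E_k(t)^{1/2}+E_k(t)+E_k(t)^{1/2}X_k(t)^{1/2}.
\]
Since $E_k(t)\ll1$, the term $E_k(t)^{1/2}X_k(t)^{1/2}\le\tfrac12 X_k(t)+\tfrac12 E_k(t)$ may be absorbed into the left side, and $E_k(t)\lesssim E_k(t)^{1/2}$, giving $X_k(t)\lesssim E_k(t)^{1/2}$ as claimed. The main obstacle I anticipate is bookkeeping in the $r\ge\jbtt$ region: one must verify that every derivative appearing in $\nabla\Gamma^\alpha v$ and $\nabla\Gamma^\alpha G$ is, after the decomposition \eqref{der-decomp}, either a radial derivative of a \emph{good} component (handled by \eqref{spconv}--\eqref{spcurlg}) or carries a compensating $r^{-1}$, and that the index counts in $N_k$ never exceed $k$; the potential circularity—$X_k$ reappearing through $\|N_k\|_{L^2}$ via Lemma~\ref{nonlinest}—is exactly what the smallness hypothesis $E_k\ll1$ is there to break.
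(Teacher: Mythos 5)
Your overall architecture is the paper's: reduce to $X_k\lesssim E_k^{1/2}+\|N_k\|_{L^2}$, feed in Lemma \ref{nonlinest}, and absorb $E_k^{1/2}X_k^{1/2}$ by Young's inequality under the smallness hypothesis; the roles of Lemmas \ref{nonlinest} and \ref{div2grad} and of \eqref{sppde} are all correctly identified. But the reduction step as you describe it contains a genuine error in the region $r\ge\jbtt$. You claim that after the decomposition \eqref{der-decomp} ``the radial parts of $\nabla\Gamma^\alpha v$ and $\nabla\Gamma^\alpha G$ are governed by $r^{-1}(L_k+N_k)$.'' This is false: \eqref{spconv}--\eqref{spcurlg} control, with the weight $r$, only $\partial_r\Gamma^\alpha v\cdot\omega$, $\partial_r\Gamma^\alpha G^\top\omega$, $\partial_r\Gamma^\alpha G\omega^\perp$, and the \emph{difference} $\partial_r\Gamma^\alpha G\omega-\nabla\cdot\Gamma^\alpha G$. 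The remaining pieces --- $\partial_r\Gamma^\alpha v\cdot\omega^\perp$ and $\nabla\cdot\Gamma^\alpha G$ itself, i.e.\ exactly the ``bad'' combination $\partial_r(\Gamma^\alpha v-\Gamma^\alpha G\omega)$ --- admit only the weight $t-r$, via \eqref{sppde}. If they carried the weight $r$, one could define $X_k$ with the weight $\la t+r\ra$ and most of the paper's later machinery would be unnecessary. So your outer-region argument, as written, fails precisely on the component that makes the theorem nontrivial.

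The missing idea, which also removes any need to split regions, is the elementary identity $\nabla\Gamma^\alpha v=\tfrac12 A_+ +\tfrac12 A_-$ and $\nabla\cdot\Gamma^\alpha G\otimes\omega=\tfrac12 A_+ -\tfrac12 A_-$, where $A_\pm=\nabla\Gamma^\alpha v\pm\nabla\cdot\Gamma^\alpha G\otimes\omega$. Since \eqref{sppde} gives $(t+r)|A_+|\lesssim L_k+N_k$ and $(t-r)|A_-|\lesssim L_k+N_k$, and $|t-r|\le t+r$, one obtains on all of $\rr^2$ the pointwise bound $|t-r|\,(|\nabla\Gamma^\alpha v|+|\nabla\cdot\Gamma^\alpha G|)\lesssim L_k+N_k$, hence $\|(t-r)\nabla\Gamma^\alpha v\|_{L^2}+\|(t-r)\nabla\cdot\Gamma^\alpha G\|_{L^2}\lesssim E_k^{1/2}+\|N_k\|_{L^2}$. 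This also settles the step you left unresolved (``the only genuinely $(t-r)$-weighted object left, $\nabla\cdot\Gamma^\alpha G$, appears on both sides''): there is no circularity there, since $\|(t-r)\nabla\cdot\Gamma^\alpha G\|_{L^2}$ is bounded outright, and Lemma \ref{div2grad} then upgrades it to the full $\|(t-r)\nabla\Gamma^\alpha G\|_{L^2}$ at the cost of $\|(t+r)h_\alpha\|_{L^2}\le\|N_k\|_{L^2}$. The only bootstrap in the proof is the one you correctly perform at the end through $\|N_k\|_{L^2}\lesssim E_k+E_k^{1/2}X_k^{1/2}$.
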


\begin{proof}

Starting with  definition \eqref{WEnergy} and using the fact that $\jbtr\le 1+|t-r|$, we obtain from
 Lemma \ref{div2grad}
\begin{align*}
X_k^2&=\sum_{|\alpha|\le k-1}[\|\jbtr\nabla\Gamma^\alpha v\|_{L^2}^2+\|\jbtr\nabla\Gamma^\alpha G\|_{L^2}^2]\\
&\lesssim E_k+\sum_{|\alpha|\le k-1}[\|(t-r)\nabla\Gamma^\alpha v\|_{L^2}^2+\|(t-r)\nabla\Gamma^\alpha G\|_{L^2}^2]\\
&\lesssim E_k+\sum_{|\alpha|\le k-1}[\|(t-r)\nabla\Gamma^\alpha v\|_{L^2}^2+\|(t-r)\nabla\cdot\Gamma^\alpha G\|_{L^2}^2+\|N_k\|_{L^2}^2].\\
\end{align*}
Since
\[
\nabla\Gamma^\alpha v=\frac12[\nabla\Gamma^\alpha
v+\nabla\cdot\Gamma^\alpha G\otimes\omega]
+\frac12[\nabla\Gamma^\alpha v-\nabla\cdot\Gamma^\alpha
G\otimes\omega]
\]
and
\[
\nabla\cdot\Gamma^\alpha G=\frac12[\nabla\Gamma^\alpha v+\nabla\cdot\Gamma^\alpha G\otimes\omega]\omega
-\frac12[\nabla\Gamma^\alpha v-\nabla\cdot\Gamma^\alpha G\otimes\omega]\omega,
\]
we see that
\begin{multline*}
|t-r|\;[|\nabla\Gamma^\alpha v|+|\nabla\cdot\Gamma^\alpha G|]\\
\lesssim |t+r|\;|\nabla\Gamma^\alpha v+\nabla\cdot\Gamma^\alpha G\otimes\omega|
+|t-r|\;|\nabla\Gamma^\alpha v-\nabla\cdot\Gamma^\alpha G\otimes\omega|.
\end{multline*}
It follows from \eqref{sppde} that
\[
\|(t-r)\nabla\Gamma^\alpha v\|_{L^2}^2+\|(t-r)\nabla\cdot\Gamma^\alpha G\|_{L^2}^2
\lesssim E_k+\|N_k\|^2.
\]
and thus,
 we obtain
\[
X_k\lesssim E_k^{1/2}+\|N_k\|_{L^2}.
\]
Then applying Lemma \ref{nonlinest}, we get
\[
X_k\lesssim E_k^{1/2}+E_k+E_k^{1/2}X_k^{1/2}.
\]
Under the assumption that $E_k\ll1$, we obtain
\[
X_k\lesssim E_k^{1/2}+E_k\lesssim E_k^{1/2}.
\]
\end{proof}

\section{Estimates for Special Quantities, II}
\label{special2}
With the results of the previous section in hand, we can now complete
the estimation of $\Gamma^\alpha v+\Gamma^\alpha G\omega$ and $\Gamma^\alpha G\omega^\perp$.
\begin{lem}
\label{goodder}
Let $k \geq 4$, $E_k\ll1$, $\omega = {x}/{|x|}$. Then we have
\begin{align}
\label{goodderelltwo}
&\|r(\partial_r\Gamma^\alpha v+\partial_r\Gamma^\alpha G\omega)\|_{L^2}
+\|r\partial_r\Gamma^\alpha G\omega^\perp\|_{L^2}\lesssim E_k^{1/2},&& |\alpha|\le k-1,\\
\label{goodderellinf}
&\|r(\Gamma^\alpha v+\Gamma^\alpha G\omega)\|_{L^\infty}
+\|r\Gamma^\alpha G\omega^\perp\|_{L^\infty}\lesssim E_k^{1/2},&& |\alpha|\le k-2.
\end{align}
\end{lem}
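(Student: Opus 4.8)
The plan is to derive \eqref{goodderelltwo} first, by combining the weighted $L^2$ bound $X_k \lesssim E_k^{1/2}$ from Theorem \ref{weightedelltwo} with the pointwise special-quantity estimates of Lemma \ref{spellinfty}, and then obtain \eqref{goodderellinf} by feeding \eqref{goodderelltwo} into the radial–angular Sobolev inequality of Lemma \ref{gsob} (in the form used in Lemma \ref{divelltwo}). For the $L^2$ part, I would start from the identity \eqref{sppde} with the $``+"$ sign, which controls $(t+r)\,|\nabla\Gamma^\alpha v + \nabla\cdot\Gamma^\alpha G\otimes\omega|$, and contract with $\omega$ to get control of $(t+r)\,|\nabla\Gamma^\alpha v\,\omega + \nabla\cdot\Gamma^\alpha G|$. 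Since $r \le t+r$, the weight $r$ is dominated by $t+r$, so modulo the error $L_k + N_k$ the quantities $r(\partial_r\Gamma^\alpha v + \partial_r\Gamma^\alpha G\,\omega)$ and $r\,\partial_r\Gamma^\alpha G\,\omega^\perp$ are, by \eqref{spconv}–\eqref{spcurlg} and \eqref{gradvident2}, \eqref{gradgindent2}, expressible in terms of $r\nabla\Gamma^\alpha v + r\nabla\cdot\Gamma^\alpha G\otimes\omega$ plus lower-order angular-derivative terms bounded by $L_k + N_k$.

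Taking $L^2$ norms, the main term is bounded by $\|(t+r)(\nabla\Gamma^\alpha v + \nabla\cdot\Gamma^\alpha G\otimes\omega)\|_{L^2}$. On the region $r \ge \langle t\rangle/2$ one has $t+r \lesssim r \lesssim \langle t-r\rangle + r$; more usefully, on all of $\rr^2$ we can split $t+r \lesssim \langle t\rangle + \langle t-r\rangle$ and note that $(t+r)|\nabla\Gamma^\alpha v + \nabla\cdot\Gamma^\alpha G\otimes\omega|$ was already estimated pointwise by $L_k + N_k$ in the proof of \eqref{sppde}. Hence $\|(t+r)(\nabla\Gamma^\alpha v + \nabla\cdot\Gamma^\alpha G\otimes\omega)\|_{L^2} \lesssim \|L_k\|_{L^2} + \|N_k\|_{L^2} \lesssim E_k^{1/2} + \|N_k\|_{L^2}$, and then Lemma \ref{nonlinest} together with $X_k \lesssim E_k^{1/2}$ and $E_k \ll 1$ gives $\|N_k\|_{L^2} \lesssim E_k + E_k^{1/2}X_k^{1/2} \lesssim E_k^{1/2}$. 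The leftover error terms $r|\nabla\Gamma^\alpha v - \partial_r\Gamma^\alpha v\otimes\omega|$ and $r|\nabla\cdot\Gamma^\alpha G\otimes\omega - \partial_r\Gamma^\alpha G|$ are $\lesssim L_k + N_k$ by \eqref{gradvident2}, \eqref{gradgindent2}, whose $L^2$ norms are again $\lesssim E_k^{1/2}$. This proves \eqref{goodderelltwo} for $|\alpha| \le k-1$.

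For \eqref{goodderellinf}, I would apply \eqref{gsob0} with $\lambda = 2$ (in the $\widetilde\Omega$ vector/matrix form) to $\Gamma^\alpha v + \Gamma^\alpha G\,\omega$ and to $\Gamma^\alpha G\,\omega^\perp$, exactly as in the proof of Lemma \ref{divelltwo}, using the commutation relations of the type \eqref{omprop} to move $\Omega$ (resp.\ $\widetilde\Omega$) past the contraction with $\omega$. This reduces the $L^\infty$ bound to $\|r\,\partial_r\widetilde\Omega^a(\Gamma^\alpha v + \Gamma^\alpha G\,\omega)\|_{L^2}$ and $\|r\,\partial_r\widetilde\Omega^a(\Gamma^\alpha G\,\omega^\perp)\|_{L^2}$ for $a = 0,1$, plus the undifferentiated $L^2$ terms; since $|\alpha| + 1 \le k-1$, these are controlled by \eqref{goodderelltwo} and by $E_k^{1/2}$. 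The main obstacle is bookkeeping: one must check that each contraction of $\omega$ or $\omega^\perp$ with a $\widetilde\Omega$-derivative, and each replacement of $\partial_r$ by the full $\nabla$, produces only terms already appearing in Lemma \ref{spellinfty} or controlled by $X_k$, so that the nonlinear feedback through $N_k$ genuinely closes under $E_k \ll 1$; no new estimate is needed beyond what is assembled above.
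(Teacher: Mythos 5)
Your proposal is correct and follows essentially the same route as the paper: the $L^2$ bound comes from contracting \eqref{sppde} (with the $+$ sign) with $\omega$ and combining with \eqref{spdivg} and \eqref{spcurlg} to get the pointwise bound $\lesssim L_k+N_k$, then closing via Lemma \ref{nonlinest} and Theorem \ref{weightedelltwo}; the $L^\infty$ bound then follows from \eqref{gsob0} with $\lambda=2$ and the commutation identities \eqref{omprop-1}, exactly as in Lemma \ref{divelltwo}. The brief digression about splitting $t+r\lesssim\la t\ra+\jbtr$ is unnecessary since \eqref{sppde} already gives the needed pointwise bound, but this does not affect correctness.
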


\begin{proof}
First, note that
\[
\partial_r\Gamma^\alpha v+\partial_r\Gamma^\alpha G\omega
=(\nabla\Gamma^\alpha  v+\nabla\cdot\Gamma^\alpha G\otimes\omega)\omega
+(\partial_r \Gamma^\alpha G\omega-\nabla\cdot\Gamma^\alpha G).
\]
Therefore, by \eqref{spdivg} and \eqref{sppde}, we have
\[
r|\partial_r\Gamma^\alpha v+\partial_r\Gamma^\alpha G\omega|
\le (r+t)|\nabla\Gamma^\alpha  v+\nabla\cdot\Gamma^\alpha G\otimes\omega|
+r|\partial_r \Gamma^\alpha G\omega-\nabla\cdot\Gamma^\alpha G|
\lesssim L_k+N_k.
\]
Combining this with \eqref{spcurlg}, we get the estimate
\[
\|r(\partial_r\Gamma^\alpha v+\partial_r\Gamma^\alpha G\omega)\|_{L^2}
+\|r\partial_r\Gamma^\alpha G\omega^\perp\|_{L^2}
\le E_k^{1/2}+\|N_k\|_{L^2}.
\]
Estimate \eqref{goodderelltwo} now follows by Lemma \ref{nonlinest} and Theorem \ref{weightedelltwo}.

To obtain the other estimate, we observe that (similar to \eqref{omprop})
\begin{equation}
\label{omprop-1}
\widetilde\Omega(\Gamma^\alpha G\omega)=(\widetilde\Omega \Gamma^\alpha G)\omega,
\qquad
\widetilde\Omega(\Gamma^\alpha G\omega^\perp)=(\widetilde\Omega \Gamma^\alpha G)\omega^\perp.
\end{equation}
By \eqref{gsob0} with $\lambda=2$, \eqref{omprop-1}, \eqref{goodderelltwo}, we have
\begin{multline*}
\|r(\Gamma^\alpha v+\Gamma^\alpha G\omega)\|_{L^\infty}
+\|r\Gamma^\alpha G\omega^\perp\|_{L^\infty}\\
\lesssim
\sum_{a=0,1}[\|r(\partial_r\widetilde\Omega^a\Gamma^\alpha v+\partial_r\widetilde\Omega^a\Gamma^\alpha G\omega)\|_{L^2}
+\|\widetilde\Omega^a\Gamma^\alpha v+\widetilde\Omega^a\Gamma^\alpha G\omega\|_{L^2}\\
+\|r\partial_r\widetilde\Omega^a\Gamma^\alpha G\omega^\perp\|_{L^2}
+\|\widetilde\Omega^a\Gamma^\alpha G\omega^\perp\|_{L^2}]
\lesssim E_k^{1/2}.
\end{multline*}

\end{proof}

\section{Energy Estimate with a Ghost Weight}
\label{ghostenergy}

Choose $q = q(t - r)$, with $q(\sigma) = \int_0^\sigma\la z\ra^{-2} dz$
so that $q^\prime(\sigma) = \la \sigma\ra^{-2}$
and
$|q(\sigma)| \leq \frac{\pi}{2}$. Let $|\alpha| \leq k$. Taking
the inner product of the first and second equation in \eqref{1}
with $e^{- q}\Gamma^\alpha v$ and $e^{- q}\Gamma^\alpha G$
respectively, and then adding them up, we find
\begin{multline*}
\int\Big(e^{- q}\partial_t(|\Gamma^\alpha v|^2 + |\Gamma^\alpha
  G|^2) - 2e^{- q}(\Gamma^\alpha v_i\partial_j\Gamma^\alpha G_{ij}
  + \Gamma^\alpha G_{ij}\partial_j\Gamma^\alpha v_i)\Big)dx\\
= - 2\int e^{- q}\Gamma^\alpha v\cdot\nabla\Gamma^\alpha pdx +
  2\int e^{- q}(f_\alpha\cdot \Gamma^\alpha v + (g_\alpha)_{ij} \Gamma^\alpha G_{ij})dx.
\end{multline*}
Integration by parts gives that
\begin{align}\label{8-1}
\frac{d}{dt}\int e^{- q}&(|\Gamma^\alpha v|^2 + |\Gamma^\alpha
  G|^2)dx\\\nonumber
&= - \int e^{- q}\big[\partial_tq(|\Gamma^\alpha v|^2 +
  |\Gamma^\alpha G|^2) - 2\partial_jq\Gamma^\alpha
  v_i\Gamma^\alpha G_{ij}\big]dx\\\nonumber
&\qquad -\ 2\int e^{- q}\Gamma^\alpha v\cdot\nabla\Gamma^\alpha
  pdx + 2\int e^{- q}(f_\alpha\cdot \Gamma^\alpha v + (g_\alpha)_{ij} \Gamma^\alpha
  G_{ij})dx\\\nonumber
&= - \int \frac{e^{- q}}{\la t - r\ra^2}\big(|\Gamma^\alpha v +
  \Gamma^\alpha G\omega|^2 +
  |\Gamma^\alpha G\omega^\perp|^2\big)dx\\\nonumber
&\qquad -\ 2\int e^{- q}\Gamma^\alpha v\cdot\nabla\Gamma^\alpha
  pdx + 2\int e^{-q}\big[f_\alpha\cdot \Gamma^\alpha v
  + (g_\alpha)_{ij} \Gamma^\alpha G_{ij}\big]dx.
\end{align}
We emphasize that here we do not use integration by parts in the
term involving pressure.
We also point out that we cannot use the approach of Lemma \ref{nonlinest} to estimate the
nonlinear terms because
we now have that $|\alpha|\le k$ rather than $|\alpha|\le k-1$ as we had earlier.

Let us first treat the last term in \eqref{8-1}. Recall that
$f_\alpha$ and $g_\alpha$ are given by \eqref{2-5}. Since
 $v$ and $G^\top$ are divergence-free, we get
 \begin{align}\label{8-2}
\int e^{-q}&\big[f_\alpha\cdot \Gamma^\alpha v + (g_\alpha)_{ij}
  \Gamma^\alpha G_{ij}\big]dx\\\nonumber
&= \sum_{\tiny\begin{matrix}\beta + \gamma = \alpha\\ \gamma \neq \alpha\end{matrix}}
  \int e^{-q}\Gamma^\alpha v_i
  \big[\partial_j\Gamma^\gamma G_{ik} \Gamma^\beta G_{jk} -
  \Gamma^\beta v_j\partial_j\Gamma^\gamma v_i\big]dx\\\nonumber
&\qquad +\ \sum_{\tiny\begin{matrix}\beta + \gamma = \alpha\\ \gamma \neq \alpha\end{matrix}}
\int  e^{-q}\Gamma^\alpha G_{ik}\big[\partial_j\Gamma^\gamma v_{i} \Gamma^\beta G_{jk}
  - \Gamma^\beta v_j\partial_j\Gamma^\gamma
  G_{ik}\big]dx\\\nonumber
&\qquad +\ \frac{1}{2}\int e^{-q}\partial_j
  \big[2\Gamma^\alpha v_i\Gamma^\alpha G_{ik} G_{jk} -
  v_j(|\Gamma^\alpha v|^2 + |\Gamma^\alpha G|^2)\big]dx.
\end{align}
To estimate the last term in \eqref{8-2}, we first
compute that
\begin{multline*}
\int e^{-q}\partial_j
  \big[2\Gamma^\alpha v_i\Gamma^\alpha G_{ik} G_{jk} -
  v_j(|\Gamma^\alpha v|^2 + |\Gamma^\alpha G|^2)\big]dx\\
= - \int \jbtr^{-2}e^{-q}\big[2\Gamma^\alpha
  v_i\Gamma^\alpha G_{ik} G_{jk}\omega_j -
  (|\Gamma^\alpha v|^2 + |\Gamma^\alpha
  G|^2)v_j\omega_j\big]dx.
\end{multline*}
This is estimated by
\[
(\|\jbtr^{-2}v\cdot\omega\|_{L^\infty}+\|\jbtr^{-2}G^\top\omega\|_{L^\infty})E_k(t).
\]
Now by Lemma \ref{divelltwo}, we have
\begin{align*}
\|\jbtr^{-2}v\cdot\omega\|_{L^\infty}
&\le\|\jbtr^{-2}v\cdot\omega\|_{L^\infty(r\le\jbtt)}+\|\jbtr^{-2}v\cdot\omega\|_{L^\infty(r\ge\jbtt)}\\
&\lesssim \jbt^{-2}\|v\|_{L^\infty(r\le\jbtt)}+\|v\cdot\omega\|_{L^\infty(r\ge\jbtt)}\\
&\lesssim \jbt^{-2}\|v\|_{L^\infty(r\le\jbtt)}+\jbt^{-1}\|rv\cdot\omega\|_{L^\infty(r\ge\jbtt)}\\
&\lesssim \jbt^{-1}E_k^{1/2}.
\end{align*}
A similar estimate holds for the term with $G^\top\omega$.  We have shown that the last term in \eqref{8-2}
is bounded by $\jbt^{-1}E_k^{3/2}$.

Next, we are going to estimate the first and second terms on the
right hand side of \eqref{8-2}.  Since $k\ge5$, it is always the case that $|\gamma|\le k-1$ and
either  $|\beta|\le k-2$ or $|\gamma|\le k-3$.

For $r < \jbtt$, we use \eqref{standard-sob} and
Theorem \ref{weightedelltwo} to get
\begin{align}\label{8-4}
\sum_{\tiny\begin{matrix}\beta + \gamma = \alpha\\ \gamma \neq \alpha\end{matrix}}
  &\Big|\int_{r < \jbtt} e^{-q}\Gamma^\alpha v_i
  \big[\partial_j\Gamma^\gamma G_{ik} \Gamma^\beta G_{jk} -
  \Gamma^\beta v_j\partial_j\Gamma^\gamma v_i\big]dx\\ \nonumber
  +&\int_{r <\jbtt}e^{-q}
\Gamma^\alpha G_{ik}\big[\partial_j\Gamma^\gamma v_{i}
  \Gamma^\beta G_{jk} - \Gamma^\beta v_j\partial_j\Gamma^\gamma
  G_{ik}\big]dx\Big|\\\nonumber
&\lesssim \jbt^{-1}E_{k}(t)^{1/2}
  \Big(\|\jbtr(\nabla\Gamma^{k - 1}v, \nabla\Gamma^{k - 1}G)\|_{L^2(r < \jbtt)}\|(\Gamma^{k - 2}v, \Gamma^{k - 2}G)\|_{L^\infty}\\\nonumber
&\qquad +\ \|\jbt(\nabla\Gamma^{k - 3}v, \nabla\Gamma^{k - 3}G)\|_{L^\infty(r <\jbtt)}\|\Gamma^{k}(v, G)\|_{L^2}\Big)\\\nonumber
&\lesssim {\la t\ra}^{-1}E_{k}(t)^{1/2}
  \Big(X_{k}(t)^{1/2}E_{k }(t)^{1/2} \Big)\\\nonumber
&\lesssim {\la t\ra}^{-1}E_{k}(t)^{3/2}.
\end{align}

For $r>\jbtt$, we write using \eqref{der-decomp}
{\allowdisplaybreaks
\begin{align}
\label{main-nonlin}
\sum_{\tiny\begin{matrix}\beta + \gamma = \alpha\\ \gamma \neq \alpha\end{matrix}}
  &\Big|\int_{r >\jbtt} e^{-q}\Gamma^\alpha v_i
  \big[\partial_j\Gamma^\gamma G_{ik} \Gamma^\beta G_{jk} -
  \Gamma^\beta v_j\partial_j\Gamma^\gamma v_i\big]dx\\ \nonumber
 & +\int_{r >\jbtt}e^{-q}
\Gamma^\alpha G_{ik}\big[\partial_j\Gamma^\gamma v_{i}
  \Gamma^\beta G_{jk} - \Gamma^\beta v_j\partial_j\Gamma^\gamma
  G_{ik}\big]dx\Big|\\\nonumber
  \le&\sum_{\tiny\begin{matrix}\beta + \gamma = \alpha\\ \gamma \neq \alpha\end{matrix}}
  \Big|\int_{r > \jbtt} e^{-q}\Gamma^\alpha v_i
  \big[\omega_j\partial_r\Gamma^\gamma G_{ik} \Gamma^\beta G_{jk} -
  \Gamma^\beta v_j\omega_j\partial_r\Gamma^\gamma v_i\big]dx\\ \nonumber
  &+\int_{r >\jbtt}e^{-q}
\Gamma^\alpha G_{ik}\big[\omega_j\partial_r\Gamma^\gamma v_{i}
  \Gamma^\beta G_{jk} - \Gamma^\beta v_j\omega_j\partial_r\Gamma^\gamma
  G_{ik}\big]dx\Big|\\ \nonumber
  &+  \int_{r>\jbtt}R_\alpha dx\\\nonumber
 = &\sum_{\tiny\begin{matrix}\beta + \gamma = \alpha\\ \gamma \neq \alpha\end{matrix}}
  \Big|\int_{r > \jbtt} e^{-q}\big\la(\Gamma^\alpha v,\Gamma^\alpha G),
  B[(\Gamma^\beta v,\Gamma^\beta G),(\partial_r\Gamma^\gamma v,\partial_r\Gamma^\gamma G)]\big\ra dx
  \Big|\\ \nonumber
  & + \int_{r>\jbtt}R_\alpha dx,
\end{align}
}
in which
\[
B[(v_1,G_1),(v_2,G_2)]=\big(G_2G_1^\top\omega-(v_1\cdot\omega)v_2,v_2\otimes G_1^\top\omega-(v_1\cdot\omega)G_2\big),
\]
and
\[
|R_\alpha|\lesssim\frac1r\sum_{\tiny\begin{matrix}\beta + \gamma = \alpha\\ \gamma \neq \alpha\end{matrix}}
|((\Gamma^\alpha v,\Gamma^\alpha G)|\;
|((\Gamma^\beta v,\Gamma^\beta G)|\;
|(\widetilde\Omega\Gamma^\gamma v,\widetilde\Omega\Gamma^\gamma G)|.
\]
To analyze the structure of these terms when $r > \jbtt$, we decompose vector/matrix pairs $(v,G)$ as
\begin{align*}
(v,G)&=\sum_{k=-1}^1\Pi_k(v,G)\\
\Pi_1(v,G)&= {\textstyle\frac12}((v+G\omega),(v+G\omega)\otimes\omega))\\
\Pi_{-1}(v,G)&= {\textstyle\frac12}((v-G\omega),-(v-G\omega)\otimes\omega))\\
\Pi_0(v,G)&=(0,G\omega^\perp\otimes\omega^\perp).
\end{align*}
We have the following cancellations
\begin{align*}
&B[\Pi_k(v_1,G_1),\Pi_k(v_2,G_2)]=0,&&k=\pm1\\
&B[\Pi_0(v_1,G_1),\Pi_k(v_2,G_2)]=0,&&k=\pm1\\
&B[\Pi_k(v_1,G_1),\Pi_0(v_2,G_2)]=0,&&k=\pm1.
\end{align*}
In particular, there is no self-interaction of the ``bad" quantity $\Pi_{-1}$.  Thus, we have
\begin{multline*}
B[(v_1,G_1),(v_2,G_2)]=B[\Pi_1(v_1,G_1),\Pi_{-1}(v_2,G_2)]\\
+B[\Pi_{-1}(v_1,G_1),\Pi_1(v_2,G_2)]+B[\Pi_0(v_1,G_1),\Pi_0(v_2,G_2)].
\end{multline*}
The conclusion of this discussion is that the terms in \eqref{main-nonlin} are bounded by
{\allowdisplaybreaks
\begin{align*}
\sum_{\tiny\begin{matrix}\beta + \gamma = \alpha\\ \gamma \neq \alpha\end{matrix}}
&\int_{r>\jbtt} |(\Gamma^\alpha v,\Gamma^\alpha G)|\;
\Big[
|\Gamma^\beta v+\Gamma^\beta G\omega|\;|(\partial_r\Gamma^\gamma v,\partial_r\Gamma^\gamma G)|\\
&+|(\Gamma^\beta v,\Gamma^\beta G)|\;|\partial_r\Gamma^\gamma v+\partial_r\Gamma^\gamma G\omega|\\
&+|(\Gamma^\beta v,\Gamma^\beta G)|\;|\partial_r\Gamma^\gamma G\omega^\perp|\\
&+\frac1r|((\Gamma^\beta v,\Gamma^\beta G)|\;
|(\widetilde\Omega\Gamma^\gamma v,\widetilde\Omega\Gamma^\gamma G)|\Big]dx\\
\le& C \|(\Gamma^k v,\Gamma^k G)\|_{L^2}\\
&\times\Big[\jbt^{-1}\|r(\Gamma^{k-2} v+\Gamma^{k-2} G\omega)\|_{L^\infty(r>\jbtt)}
\|(\partial_r\Gamma^{k-1} v,\partial_r\Gamma^{k-1} G)\|_{L^2}\\
&+\jbt^{-1/2}\|\frac{\Gamma^{k} v+\Gamma^{k} G\omega}{\jbtr}\|_{L^2}
\|r^{1/2}\jbtr(\partial_r\Gamma^{k-3} v,\partial_r\Gamma^{k-3} G)\|_{L^\infty(r>\jbtt)}\\
&+\jbt^{-1}\|(\Gamma^{k-2}v,\Gamma^{k-2} G)\|_{L^\infty}
\|r|\partial_r\Gamma^{k-1} v+\partial_r\Gamma^{k-1} G\omega|
+r|\partial_r\Gamma^{k-1} G\omega^\perp|\|_{L^2(r>\jbtt)}\\
&+\jbt^{-1}\|(\Gamma^k v,\Gamma^k G)\|_{L^2}
\|r|\partial_r\Gamma^{k-3} v+\partial_r\Gamma^{k-3} G\omega|
+r|\partial_r\Gamma^{k-3} G\omega^\perp|\|_{L^\infty(r>\jbtt)}\\
&+\jbt^{-1}E_k\Big].
\end{align*}
}
By \eqref{goodderellinf},  we have that
\[
\|r(\Gamma^{k-2} v+\Gamma^{k-2} G\omega)\|_{L^\infty(r>\jbtt)}
\lesssim E_k^{1/2}.
\]
By \eqref{gsob1}, we obtain
\begin{multline*}
\jbt^{-1/2}\left\|\frac{\Gamma^{k} v+\Gamma^{k} G\omega}{\jbtr}\right\|_{L^2}
\|r^{1/2}\jbtr(\partial_r\Gamma^{k-3} v,\partial_r\Gamma^{k-3} G)\|_{L^\infty(r>\jbtt)}\\
\le \jbt^{-1/2}\left\|\frac{\Gamma^{k} v+\Gamma^{k}G\omega}{\jbtr}\right\|_{L^2}
 E_k^{1/2}.
\end{multline*}
By the conventional Sobolev imbedding, \eqref{goodderelltwo}, and \eqref{goodderellinf} we have
\[
\|(\Gamma^{k-2}v,\Gamma^{k-2} G)\|_{L^\infty}
\|r|\partial_r\Gamma^{k-1} v+\partial_r\Gamma^{k-1} G\omega|
+r|\partial_r\Gamma^{k-1} G\omega^\perp|\|_{L^2}\lesssim E_k,
\]
and
\[
\|(\Gamma^k v,\Gamma^k G)\|_{L^2}
\|r|\partial_r\Gamma^{k-3} v+\partial_r\Gamma^{k-3} G\omega|
+r|\partial_r\Gamma^{k-3} G\omega^\perp|\|_{L^\infty}\lesssim E_k.
\]

It follows, therefore, that the integral \eqref{8-2} is bounded by
\[
 \mu\left\|\frac{\Gamma^{k} v+\Gamma^{k}G\omega}{\jbtr}\right\|_{L^2}^2
+C_\mu\jbt^{-1} E_k^{3/2},
\]
for an arbitrarily small $\mu>0$.

It remains to treat the pressure term in \eqref{8-1}.
However, thanks to \eqref{pressure-2}, this term is handled exactly as the preceding ones.

Finally, we gather our estimates for  \eqref{8-1} to
get
\[
\widetilde E_k'(t)+\left\|\frac{\Gamma^{k} v+\Gamma^{k}G\omega}{\jbtr}\right\|_{L^2}^2
\le
\mu\left\|\frac{\Gamma^{k} v+\Gamma^{k}G\omega}{\jbtr}\right\|_{L^2}^2
+C_\mu \jbt^{-1} E_k^{3/2},
\]
with
\[
\widetilde E_k(t)=\sum_{|\alpha|\le k}\int e^{- q}(|\Gamma^\alpha v|^2 + |\Gamma^\alpha
  G|^2)dx.
\]
Noting that $E_k\sim \widetilde E_k$, we obtain, for $\mu$ small,
\[
\widetilde E_k'(t)\le C_0\jbt^{-1}\widetilde E_k(t)^{3/2}, \quad 0\le t<T.
\]
This implies that $E_k(t)$ remains bounded by $2\epsilon^2$ on a time interval
of order $T\sim \exp (C_0/\epsilon)$.

\section{General Isotropic Elastodynamics}
\label{general}

For general isotropic elastodynamics, the energy functional has the form $W = W(F)$ with
\begin{equation}\label{9-5}
W(F) = W(QF)= W(FQ)
\end{equation}
for all rotation  matrices: $Q=Q^\top$, $\det Q = 1$. The first relation is
due to  frame indifference, while the second one expresses the
isotropy of materials.
This implies that $W$ depends on $F$ through the principal invariants of $FF^\top$, namely $\tr FF^\top$ and $\det FF^\top$ in 2-D.
Setting  $\tau=\frac12\;\tr FF^\top$ and $\delta=\det F=(\det FF^\top)^{1/2}$,  we may assume that $W(F)=\bar W(\tau,\delta)$,
for some smooth function $\bar W:\rr^+\times\rr^+\to\rr^+$.

Since
\[
\frac{\partial\tau}{\partial F}= F\quad\mbox{and}\quad \frac{\partial\delta}{\partial F}=\delta F^{-T},
\]
the Piola-Kirchhoff
stress    has the form
\begin{equation}\nonumber
S(F) \equiv\frac{\partial W(F)}{\partial F}=\bar W_\tau(\tau,\delta)\;F+\bar W_\delta(\tau,\delta)\delta F^{-T}.
\end{equation}
We  assume that the reference configuration is
stress free, $S(I)=0$, so that
\begin{equation}
\label{stressfree}
\bar W_\tau(1,1)+\bar W_\delta(1,1)=0.
\end{equation}
 The Cauchy stress tensor is
 \[
 T(F)\equiv \delta^{-1} S(F)F^\top=\delta^{-1}\bar W_\tau(\tau,\delta)FF^\top+\bar W_\delta(\tau,\delta) I,
 \]
 and the term $\nabla\cdot FF^\top$ in
\eqref{Elas} is replaced by $\nabla\cdot T(F)$.  Let us now proceed to
examine this term.

Write
\begin{align*}
T(F)= &\bar W_\tau(1,1)FF^\top\\
&+[\delta^{-1}\bar W_\tau(\tau,\delta)-\bar W_\tau(1,1)][FF^\top-I]\\
&+[\delta^{-1}\bar W_\tau(\tau,\delta)-\bar W_\tau(1,1)+\bar W_\delta(\tau,\delta)]I\\
\equiv& \sum_{a=1}^3T_a(F).
\end{align*}

Assume that
\begin{equation}
\label{ellipticity}
\bar W_\tau(1,1)>0.
\end{equation}
 Then $T_1(F)$ gives rise to a Hookean term.
Notice that  assumption \eqref{ellipticity} rules out the hydrodynamical case $\bar W_\tau=0$.
 The principal invariants can be expanded about the identity as follows:
\begin{equation}
\label{tauexp}
\tau=\frac12\tr FF^\top=\frac12\tr (I+G)(I+G^\top)=1+\tr G +\frac12\tr GG^\top
\end{equation}
and
\begin{equation}
\label{deltaexp}
\delta=\det F
=\det(I+G)
=1+\tr G +\det G.
\end{equation}
In the case of incompressible motion, we have $\delta =1$, so from \eqref{deltaexp}, we get
\[
\tr G+\det G=0,
\]
and hence from \eqref{tauexp}
\[
\tau-1= \frac12\;\tr GG^\top-\det G={\mathcal O}(|G|^2).
\]
Thus, we see that for $|G|\ll1$
\[
T_2(F)={\mathcal O}[(\tau-1)|G|]={\mathcal O}(|G|^3),
\]
which produces nonlinearities which are of cubic order or higher.
Finally,  $T_3(F)$ leads to a gradient term which can be included in the pressure.
The conclusion is that the general incompressible isotropic case differs from the Hookean
case by a nonlinear perturbation which is cubic in the displacement gradient $G$.
Such terms present no further obstacles in the proof of almost global existence in 2-D,
in particular, they have the requisite symmetry properties for energy calculations, see \cite{ST07}.

\begin{thm}\label{thm-general}
Let $(v_0,G_0)\in H^k_\Lambda$, with $k\ge5$.
Suppose that $(v_0,G_0)$ satisfy the constraints \eqref{derconstr}, \eqref{incomconstr}
and $\|(v_0,G_0)\|_{H^k_\Lambda}<\epsilon$.

Assume that the smooth strain energy function $W(F)$ is isotropic, frame indifferent, and satisfies
\eqref{stressfree}, \eqref{ellipticity}.

 There exist two
positive constants $C_0$ and $\epsilon_0$ which depend only on $k$
such that, if $\epsilon \leq \epsilon_0$, then the system of incompressible isotropic
elastodynamics
\begin{equation}\label{Elas-general}
\begin{cases}
\partial_tv + v\cdot\nabla v + \nabla p = \nabla\cdot T(F),\\[-4mm]\\
\partial_tF + v\cdot\nabla F = \nabla vF,\\[-4mm]\\
\nabla\cdot v = 0,\quad \nabla\cdot F^\top=0.
\end{cases}
\end{equation}
 with initial data $(v_0, F_0)=(v_0,I+G_0)$ has a
unique solution $(v,F)=(v, I+G)$, with $(v,G) \in  H^k_\Lambda(T)$,
 $T\ge \exp(C_0/\epsilon)$, and $E_k(t)\le 2\epsilon^2$, for $0\le t<T$.

\end{thm}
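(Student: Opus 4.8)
The plan is to reduce Theorem \ref{thm-general} to the Hookean case, Theorem \ref{thm}, by treating the extra stress contributions as a perturbation that does not disturb any of the estimates of Sections \ref{ellinfty}--\ref{ghostenergy}. As shown in the preceding discussion, after writing $T(F)=\sum_{a=1}^3 T_a(F)$ we have: $T_1(F)=\bar W_\tau(1,1)FF^\top$, which by \eqref{ellipticity} is a positive multiple of the Hookean stress (absorbing the constant $\bar W_\tau(1,1)>0$ by rescaling $G$ and $t$, or simply carrying it through the constant-coefficient linear part); $T_3(F)$ is a scalar multiple of the identity, hence $\nabla\cdot T_3(F)$ is a gradient and can be absorbed into the pressure $\nabla p$ without changing the structure of Lemma \ref{lem4-1}; and $T_2(F)=\mathcal O(|G|^3)$ for $|G|\ll1$, using the incompressibility relation $\delta=1$ together with expansions \eqref{tauexp}, \eqref{deltaexp}. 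So the equation \eqref{Elas-general} differs from \eqref{Elas} only by the addition of $\nabla\cdot T_2(F)$ to the right side of the momentum equation.

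First I would set up the commutation identities exactly as in Section \ref{commutation}: applying $\Gamma^\alpha$ to \eqref{Elas-general} yields the same system \eqref{1}, but with $f_\alpha$ replaced by $f_\alpha + \tilde f_\alpha$, where $\tilde f_\alpha = \Gamma^\alpha(\nabla\cdot T_2(F))$. Because $T_2$ is a smooth function of $G$ vanishing to third order, $\tilde f_\alpha$ is a sum of terms of the schematic form $(\Gamma^{\beta_1}G)(\Gamma^{\beta_2}G)(\nabla\Gamma^{\beta_3}G)$ (and higher order), with $\beta_1+\beta_2+\beta_3=\alpha$, times smooth bounded coefficients (for $E_k\ll1$); this uses the fact that $\Gamma$ commutes with smooth nonlinear composition up to lower-order products, and that $S$ and $\widetilde\Omega$ act as derivations modulo the algebraic symmetry corrections already built into $\widetilde\Omega$ on matrices. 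The key point is that $\tilde f_\alpha$ is \emph{cubic}, so it is better than $f_\alpha$ in every estimate: in Lemma \ref{lem4-1} the pressure bound acquires an extra cubic null-type term $\|(\Gamma^\beta G)(\Gamma^\gamma G)(\nabla\Gamma^\mu G)\|_{L^2}$ which, having an extra factor of $G=\mathcal O(E_k^{1/2})$, is dominated; in Lemma \ref{nonlinest} one gets an extra $E_k^{1/2}$ times the existing right side, still $\lesssim E_k + E_k^{1/2}X_k^{1/2}$ for $E_k\ll1$; in the special-quantity estimates of Sections \ref{special1}, \ref{special2} the new contribution to $N_k$ is again controlled the same way; and in the ghost-weight energy estimate of Section \ref{ghostenergy}, the cubic term contributes, after pairing with $e^{-q}\Gamma^\alpha v$, an integral of the form $\int e^{-q}|\Gamma^k(v,G)|\,|\Gamma^{k-2}G|_{L^\infty}|(\Gamma^{k-1}G)(\nabla\Gamma^{k-1}G)|$ type which, distributing derivatives with $k\ge5$ as before and invoking Lemma \ref{standard-sob}, \eqref{goodderelltwo}, \eqref{goodderellinf}, Theorem \ref{weightedelltwo}, is bounded by $\mu\|\jbtr^{-1}(\Gamma^k v+\Gamma^k G\omega)\|_{L^2}^2 + C_\mu\jbt^{-1}E_k^2$ — strictly better than the quadratic terms. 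One must also check, as remarked after \eqref{deltaexp} and referencing \cite{ST07}, that $T_2(F)$ retains the requisite symmetry for the energy identity \eqref{8-1}, i.e.\ that the top-order part of $\nabla\cdot T_2(F)$ paired against $\Gamma^\alpha v$ can be symmetrized or is lower order; this is the standard quasilinear symmetry condition for elasticity and holds because $T_2$ is derived from a stored energy function.

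With these observations, the entire chain of lemmas goes through verbatim with $f_\alpha$ replaced by $f_\alpha+\tilde f_\alpha$, producing the same differential inequality $\widetilde E_k'(t)\le C_0\jbt^{-1}\widetilde E_k(t)^{3/2}$ on $[0,T)$ (the cubic corrections only improving constants), from which the almost-global bound $E_k(t)\le 2\epsilon^2$ on $T\ge\exp(C_0/\epsilon)$ follows by the same ODE comparison and continuation argument as in Theorem \ref{thm}. Local existence and uniqueness in $H^k_\Lambda(T)$ for \eqref{Elas-general} with $\det F_0\equiv 1$ and the constraints \eqref{derconstr}, \eqref{incomconstr} is standard — the system is symmetric hyperbolic modulo the divergence-free projection eliminating the pressure, and \eqref{ellipticity} guarantees the symbol of the linearized elastic operator is strictly positive definite — and the constraints propagate in time by Lemma \ref{lem2-1}. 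The only genuine obstacle I anticipate is bookkeeping rather than conceptual: namely, verifying carefully that applying the full vector-field family $\Gamma$ to the nonlinear tensor $T_2(F)$, which involves $F^{-T}$ through $\delta F^{-T}$, does not generate uncontrolled terms — but since $\delta\equiv1$ in the incompressible case, $\delta F^{-T}$ is just the cofactor matrix of $F$, a polynomial (indeed quadratic in 2-D) in $F$, so $T_2(F)$ is in fact a genuinely polynomial (hence smooth with no inversion) cubic-and-higher function of $G$, and the commutation computation is purely algebraic. This removes the last technical worry and completes the reduction.
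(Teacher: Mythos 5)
Your proposal follows essentially the same route as the paper: the paper's entire treatment of the general case consists of the decomposition $T(F)=T_1+T_2+T_3$, the observations that $T_1$ is Hookean (with positive coefficient by \eqref{ellipticity}), that $T_3$ is a gradient absorbed into the pressure, and that $T_2={\mathcal O}(|G|^3)$ via incompressibility and \eqref{tauexp}--\eqref{deltaexp}, after which the cubic perturbation is asserted to cause no new difficulties, with the requisite symmetry for the energy estimate referred to \cite{ST07}. Your extra remarks --- that $\delta F^{-T}$ is the cofactor matrix, hence polynomial in $F$ in 2-D so no inversion arises, and the explicit tracking of the cubic terms through each lemma --- merely flesh out details the paper leaves implicit.
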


%-----------------------------------------------------------------------------bibliography

\end{document}